\newtheorem{theor}{{\bf Theorem}}[section]
\newtheorem{lemma}[theor]{{\bf Lemma}}           
\newtheorem{coro}[theor]{{\bf Corollary}}
\newtheorem{prop}[theor]{{\bf Proposition}}
\newtheorem{remark}{{\bf Remark}}[section]
\numberwithin{equation}{section}
\newcommand{\sauf}{\setminus}
\newcommand{\epsl}{\varepsilon}
\newcommand{\bfb}{\boldsymbol{b}}
\newcommand{\bfg}{\boldsymbol{g}}
\newcommand{\bfbeta}{\boldsymbol{\beta}}
\newcommand{\bfepsl}{\boldsymbol{\epsl}} 
\newcommand{\bfmu}{\boldsymbol{\mu}} 
\newcommand{\bfalpha}{\boldsymbol{\alpha}}
\newcommand{\calA}{\mathcal A}
\newcommand{\calB}{\mathcal B}
\newcommand{\calC}{\mathcal C}
\newcommand{\calH}{\mathcal H}
\newcommand{\calHD}{\calH^{D}}
\newcommand{\hatHD}{\hat{H}^{D}}
\newcommand{\Kint}{{\bf K}_{\rm int}}
\newcommand{\Kext}{{\bf K}_{\rm ext}}
\newcommand{\calP}{\mathcal P}
\newcommand{\calS}{\mathcal S}
\newcommand{\Com}{{\rm comat}}
\newcommand{\Ima}{{\rm Im}}
\newcommand{\rg}{{\rm rank}}
\newcommand{\rmd}{{\rm d}}
\renewcommand{\span}{{\rm span}}
\newcommand{\supp}{{\rm supp}\,}
\newcommand{\Om}{\Omega}
\newcommand{\om}{\omega}
\newcommand{\ove}{\overline}
\newcommand{\N}{\mathbf N}
\newcommand{\R}{\mathbf R}
\newcommand{\Sm}{\mathbb S}
\newcommand{\T}{\mathbb T}
\title{Rellich Type Theorem and unique continuation property for Discrete Maxwell Operators}
\author{Hiroshi Isozaki$^{\small 1}$}
\author{Olivier Poisson$^{\small 2}$}
\begin{document}
\baselineskip 14pt

\maketitle
%
{\footnotesize
 \centerline{$^{\small 1}$ University of Tsukuba, Japan}
 \centerline{$^{\small 2}$  Aix-Marseille Universit\'{e}, France}
} 

\begin{abstract}
 We study the Rellich type theorem (RT) for the Maxwell operator $\hat H^D=\hat D\hat H_0$ on ${\bf Z}^3$
 in a constant anisotropic medium, i.e., the permittivity and permeability of which are constant
 non-scalar diagonal matrices. We also prove the unique continuation property (UCP) in the exterior
 of a compact convex set $\Kint\subset {\bf Z}^3$ for the perturbed Maxwell operator
 $\hat H^{D_p}=\hat {D}_p\hat H_0$ on ${\bf Z}^3$ for which the permittivity and permeability
 are locally perturbed from a constant matrix on a compact subset in $\Kint$.

\end{abstract}

\noindent{\em 2020 Mathematics Subject Classification.} Primary 35Q61, 47A40, 81Q35.\\
\noindent{\em Keywords}. Maxwell equations, Scattering theory of linear operators,
  Quantum mechanics on lattices.

\date{\today}

\section{Introduction}
\subsection{Presentation and Main results}
 Let us begin with general definitions.
 Consider an operator $\hat Q$ acting on the space $({\bf C}^m)^{{\bf Z}^d}$ of sequences 
 $(\hat u(n))_{n\in {\bf Z}^d}$ with values in ${\bf C}^m$, where $m,d\ge 1$.
 We efine the Besov type space
\[
 \calB_0^*({\bf Z}^d;{\bf C}^m) := \Big\{ \hat u: \; {\bf Z}^d \to {\bf C}^m \; \mid\;
   \lim_{R\to\infty} \frac1R\sum_{|n|<R} |\hat u(n)|^2 = 0\Big\}.
\]
 We say that the Rellich type theorem (RT) holds for $\hat Q$, if $\hat u\in\calB_0^*({\bf Z}^3;{\bf C}^6)$
 satisfies the equation
\begin{equation}
\label{e.Qu=0}
 \hat Q \hat u\, (n) = 0,\quad n\in {\bf Z}^d
\end{equation}
 outside a compact set in ${\bf Z}^d$, then $\hat u = 0$ on $\{n\in{\bf Z}^d$ $\mid$ $|n| > R\}$
 for some constant $R > 0$.

 (RT) is connected with the unique continuation property (UCP) in exterior domains of ${\bf Z}^d$. 
 Here, by an exterior domain we mean the set of the form $\Kext={\bf Z}^d\sauf \Kint$, 
 $\Kint$ being compact.
 We say that (UCP) holds for $\hat Q$ in $\Kext$ if, all $\hat u$ with compact support
 in ${\bf Z}^d$ satisfying $\hat Q\hat u = 0$ in $\Kext$ vanishes on $\Kext$.
 
 We consider (RT) and (UCP) for the discrete Maxwell operator $\hatHD$
 defined on the square lattice ${\bf Z}^3$.

 Let $\bfepsl$ and $\bfmu$ be the permittivity and the permeability for the ambient space ${\bf Z}^3$, 
 which  are the $3\times 3$ constant diagonal matrices with diagonal elements,
 $\epsl_1, \epsl_2, \epsl_3\in (0,\infty)$ for $\bfepsl$, and $\mu_1,\mu_2,\mu_3 \in (0,\infty)$
 for $\bfmu$.
 Letting $\hat H_0$ be the isotropic discrete Maxwell operator with
 $\bfepsl = \bfmu = I_{3\times 3}$, the anisotropic Maxwell operator is defined by
\[ \hatHD = \hat D \hat H_0, \]
 where we set
\[
 \hat D = \left(\begin{array}{cc} \bfepsl & 0_{3\times3} \\ 0_{3\times3} &\bfmu
  \end{array}\right).\qedhere
\]
 The perturbed Maxwell operator has the form $\hat H^{D_p}=\hat D_p \hat H_0$ where $\hat D_p$
 is a perturbation of $\hat D$, i.e., 
\[
 \hat D_p := \left(\begin{array}{cc} \tilde\bfepsl & 0_{3\times3} \\ 0_{3\times3} &\tilde\bfmu
  \end{array}\right),
\]
 $\tilde\bfepsl_j(n)>0$, $\tilde\bfmu_j(n)>0$ for $1\le j\le 3$ and $n\in{\bf Z}^3$, and
 $\hat D_p-\hat D$ has compact support.
 
 Our interest in (RT) and (UCP) appears in the application to inverse problems
 similar to those studied in \cite{AND.INV} for the Schr\"odinger operator on lattices.
 This aspect will be developed in further publications.
 
\medskip
 Following the methods  in \cite{ISO.REL}, \cite{AND.SPE2}, we pass to the Fourier series and consider
 $H^D=\hat D H_0$, which is a multiplication operator on the real torus $\T^3\approx (\R/(2\pi {\bf Z}))^3$.
 The operator $H^D$ is then represented by the analytically fibered self-adjoint operator
 $\T^3 \ni x\mapsto H^D(x)=\hat D H_0(x)$ where $H_0(x)$ is a $6\times 6$ real symmetric matrix
 depending on the variable $y=(y_1,y_2,y_3)$, $y_i = \sin x_i$ only (see \eqref{def.matH0}).
 The equation $(\hatHD -\lambda)\hat u\, (n) = 0$ outside a compact set is then equivalent to
\begin{equation}
\label{e.Hbu=f}
 (H^D -\lambda)u = f ,
\end{equation}
 where $f$ is a trigonometric polynomial of the variable $x\in \T^3$, and where \eqref{e.Hbu=f}
 has to be taken in the sense of distributions. 
 Using the correspondence $\hat u \in \calB_0^*({\bf Z}^3)\iff u\in \calB_0^*(\T^3)$, the problem
 is reduced to the Besov space in terms of the variable $x$.
 Multiplying \eqref{e.Hbu=f} by the cofactor matrix of $H^D(x)-\lambda$ leads to  the equation
\begin{equation}
\label{e.qu=g}
 q(\cdot;\lambda)u = g \quad {\rm on} \quad \T^3,
\end{equation}
 where $q(x;\lambda):=\det(H^D(x)-\lambda)$ and $g$ is a trigonometric polynomial.
 Let us recall the factorization of $q(x;\lambda)$ and the spectral analysis of $H^D(x)$
 (see \cite{POI.SPE}).
 The spectrum $ \sigma(H^D(x))$ of $H^D(x)$ depends on $z= (z_1,z_2,z_3)$, $z_i=\sin^2 x_i$,
 only, and $q(x;\lambda)$ is a polynomial of $z$ and $\lambda$, which we denote by $p(z;\lambda)$.
 We write the diagonal matrices $\bfepsl = (\epsl_1,\epsl_2,\epsl_3)$, $\bfmu = (\mu_1,\mu_2,\mu_3)$
 and put:
\begin{equation}
\label{def.beta-alpha}
\begin{aligned}
 \bfbeta &:=& \bfepsl \times \bfmu =(\beta_1,\beta_2,\beta_3)\\
 \alpha_i &:=& (\epsl_j  \mu_k + \epsl_k \mu_j)/2, \\
 \gamma_i &:=&  \epsl_j \epsl_k \mu_j\mu_k, 
\end{aligned}
\end{equation}
 where $(i, j, k)$ ranges over the cyclic permutations of $(1,2,3)$, and $\bfalpha:=(\alpha_1,\alpha_2,\alpha_3)$.
 We put
\[
\begin{aligned}
 {\bf B}_0 &:=& \{(0,0,0)\}, \\
 {\bf B}_{3} &:=& \{\bfbeta \in {\bf R}^3\; \mid \; \beta_1\beta_2\beta_3 \neq 0\},\\
 {\bf B}_{12} &:=& {\bf R}^3 \setminus ({\bf B}_0\cup {\bf B}_3).
\end{aligned}
\]

 We have
\begin{equation}
\label{val.pz}
 q(x;\lambda) = p(z;\lambda) = \lambda^2\left(\lambda^4 - 2 \lambda^2\Psi_0(z) + \Psi_0^2(z) - K_0(z)\right),
\end{equation}
 where we put
\begin{equation}
\label{def.Psi0}
\begin{aligned}
 K_0(z) &:=& \frac{1}{4}\sum_{i=1}^3(\beta_iz_i)^2 - \frac12\sum_{1\leq i<j\leq 3}\beta_i\beta_jz_iz_j,\\
 \Psi_0(z) &:=& \alpha\cdot z = \alpha_1z_1 + {\rm c.p.}. \hspace{2,3cm} 
\end{aligned}
\end{equation}
 Then, $p(z;\lambda)$ is rewritten as:
\[
 p(z;\lambda) = \lambda^2(\tau^+(z) - \lambda^2)(\tau^-(z) - \lambda^2),
\]
 where we put:
\begin{equation}
 \tau^{\pm}(z) := \Psi_0(z) \pm \sqrt{K_0(z)}.
\label{taupmzdefine}
\end{equation}
 In \cite{POI.SPE}, it is shown that $\tau^-(z)\ge 0$ for all $z\in [0,+\infty)^3$, and that
\[
 \sigma(H^D(x)) = \{-\sqrt{\tau^+(z)},-\sqrt{\tau^-(z)},0,\sqrt{\tau^-(z)},\sqrt{\tau^+(z)}\} , \quad x\in\T^3.
\]
 We put
\[
 \lambda_\pm = \max_{[0,1]^3} \sqrt{\tau^\pm(z)}.
\]
 Note that $\lambda_+ \ge \lambda_- >  0$, and we have $\lambda_+=\lambda_-$ iff
  $\bfbeta \in {\bf B}_{0}$.

 Our first main result is concerned with (RT) and is stated in terms of the parameter
 $\bfbeta=\bfepsl\times\bfmu$ and $\lambda_{\pm}$. 
\begin{theor}
\label{th.2}
 (RT) for $\hatHD-\lambda$ holds in each of the following cases:
\begin{enumerate}
\item $\bfbeta \in {\bf B}_0$ and $0 < |\lambda| < \lambda_+$, 
\item $\bfbeta \in {\bf B}_3$ and $0<|\lambda|<\lambda_+$,
\item $\bfbeta \in {\bf B}_{12}$ and $0<|\lambda|<\lambda_-$.
\end{enumerate}
\end{theor} 
 Since $\lambda_- < \lambda_+$ if $\bfbeta\in  {\bf B}_{12}$, the case (3)
 in Theorem \ref{th.2} is critical.
\begin{theor}
\label{th.3}
 Assume $\bfbeta\in {\bf B}_{12}$ and $|\lambda|>\lambda_-$. Then (RT) for $\hatHD-\lambda$ fails.
\end{theor}

\begin{remark}
\label{rem1.th3}
 Even in the situation  of Theorem \ref{th.3} the following fact holds.
 Let $\lambda\in (\lambda_-,\lambda_+)$, and  $u\in\calB_0^*(\T^3)$ be such that $(H^D -\lambda)u$
 is a trigonometric polynomial of $x\in \T^3$.
 Then $v(x):=(\tau^-(z)-\lambda^2)u(x)$ is a trigonometric polynomial of  $x\in \T^3$,
 and so, $u(x)=(\tau^-(z)-\lambda^2)^{-1}v(x) \in L^2(\T^3)$.
 We consequently obtain the following result.
\begin{prop}
\label{prop.1}
 Assume $\bfbeta\in {\bf B}_{12}$ and $|\lambda|>\lambda_-$. 
 Let $\hat u \in \calB_0^*({\bf Z}^3)$ such that $(\hat H^{D_p}-\lambda)\hat u =0$ and $\hat u\neq 0$.
 Then, $\hat u$ is an eigenvector of $\hatHD$ associated with the eigenvalue $\lambda$.
\end{prop}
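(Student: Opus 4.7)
The plan is to transfer the equation to the Fourier side via the correspondence $\hat u\in\calB_0^*(\Z^3;\CC^6)\iff u\in\calB_0^*(\T^3;\CC^6)$, use Remark~\ref{rem.th3} to cancel the factor $\tau^+(z)-\lambda^2$, and then invert the surviving factor by a direct smoothness argument exploiting $|\lambda|>\lambda_-$.

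First, since $\hat V:=\hat D_p-\hat D$ has compact support, the identity $\hat H^{D_p}=\hat H^D+\hat V\hat H_0$ rewrites $(\hat H^{D_p}-\lambda)\hat u=0$ as
\[
(\hat H^D-\lambda)\,\hat u \;=\; -\,\hat V\,\hat H_0\,\hat u,
\]
whose right-hand side is finitely supported on $\Z^3$. Taking Fourier series yields the distributional equation $(H^D(x)-\lambda)u(x)=f(x)$ on $\T^3$, with $u\in\calB_0^*(\T^3;\CC^6)$ and $f$ a $\CC^6$-valued trigonometric polynomial.

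Next I split on the size of $|\lambda|$. If $|\lambda|>\lambda_+$, then $\lambda\notin\sigma(H^D(x))$ for every $x\in\T^3$, so $(H^D(x)-\lambda)^{-1}$ is a smooth matrix-valued function on $\T^3$ and $u=(H^D-\lambda)^{-1}f$ is smooth. If $\lambda_-<|\lambda|<\lambda_+$, I apply Remark~\ref{rem.th3} to conclude that $P(x):=(\tau^-(z)-\lambda^2)\,u(x)$ is a trigonometric polynomial; since $|\lambda|>\lambda_-=\max_{[0,1]^3}\sqrt{\tau^-(z)}$, we have $|\tau^-(z)-\lambda^2|\ge\lambda^2-\lambda_-^2>0$ uniformly on $\T^3$, so $1/(\tau^-(z)-\lambda^2)$ is smooth and hence $u=P/(\tau^-(z)-\lambda^2)$ is smooth. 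In either case $u\in C^\infty(\T^3;\CC^6)\subset L^2(\T^3;\CC^6)$, so $\hat u\in l^2(\Z^3;\CC^6)$. Being a non-zero $l^2$ solution of the eigenvalue equation for the self-adjoint operator $\hat H^{D_p}$, $\hat u$ is an eigenvector of $\hat H^{D_p}$ at the eigenvalue $\lambda$.

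The whole argument hinges on Remark~\ref{rem.th3}; once that fact is granted, the remainder is routine smoothness on the torus. The main obstacle is therefore the remark itself, which asserts that the factor $\tau^+(z)-\lambda^2$ in $q(x;\lambda)$---vanishing on a real two-dimensional subvariety of $\T^3$---can nevertheless be cancelled from the scalar equation $q(\cdot;\lambda)u=g$. Establishing this requires a careful analysis of the cofactor matrix of $H^D(x)-\lambda$ along the characteristic locus $\{\tau^+(z)=\lambda^2\}$ and makes essential use of the hypothesis $\bfbeta\in\mathbf{B}_{12}$, which controls the multiplicity structure of that factor.
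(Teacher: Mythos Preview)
Your argument is correct and matches the paper's: reduce to $(H^D-\lambda)u$ being a trigonometric polynomial, invoke Remark~\ref{rem.th3} to make $(\tau^-(z)-\lambda^2)u$ a trigonometric polynomial, then divide by the uniformly nonvanishing factor $\tau^-(z)-\lambda^2$ to land in $L^2$. Two side remarks: the conclusion should indeed read $\hat H^{D_p}$ as you wrote (the paper's ``$\hat H^D$'' is a slip), and your closing speculation about how Remark~\ref{rem.th3} is established is off---the paper proves it simply by applying Theorem~\ref{th.31} to $Q^+(x)=\tau^+(z)-\lambda^2$, with no cofactor-matrix analysis required.
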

\end{remark}
 Our second main result is about (UCP) and is stated as follows.
\begin{theor}
\label{th.1.3}
 Let $\Kext:={\bf Z}^3\sauf \Kint$ where $\Kint$ is a compact convex set of ${\bf Z}^3$.
 Assume $\lambda\neq 0$. Then (UCP) holds for the operator $\hat H^{D_p} - \lambda$ in $\Kext$.
\end{theor} 
\begin{remark}
\label{rem2.th3}
 The proof of Theorem \ref{th.1.3} shows that (UCP) holds in the exterior of a compact convex set
 for any scalar operator $\hat Q$ where $Q$ is the multiplication by a trigonometric polynomial
 $Q(x)$ of the form $\sum_{j=1}^3 c^+_j e^{iax_j} + c^-_j e^{-iax_j}  + \sum_{m\in{\bf Z}^3} c(m) e^{im\cdot x}$
 with $a\in \N^*$, $c_j^\pm\neq 0$, and $\sum_j |m_j|<a$.
\end{remark}

\begin{remark}
\label{rem3.th3}
 The results of \cite{POI.SPE} showed that $\sigma_p(\hatHD) = \{0\}$, and so, showed that (UCP)
 holds for the unperturbed operator $\hatHD-\lambda$ in the whole ${\bf Z}^3$ if $\lambda\neq 0$.
 Nevertheless (UCP) does not hold for $\hatHD$ in ${\bf Z}^3$ since the kernel of $\hatHD$ contains
 trigonometric polynomials. 
\end{remark}

\begin{coro}
\label{coro.1.4}
 Let $\lambda\neq 0$ and $\Kext$ be as in Theorem \ref{th.1.3}.
 Assume that (RT) for $\hatHD-\lambda$ holds.
 Let $\hat u\in \calB_0^*({\bf Z}^3)$ be a solution of $(\hat H^{D_p} -\lambda)\hat u=0$ in $\Kext$.
 Then $\hat u$ vanishes in $\Kext$.
\end{coro}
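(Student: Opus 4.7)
The plan is to combine the Rellich hypothesis with the unique continuation result of Theorem~\ref{th.1.3}: first use (RT) to force $\hat u$ to vanish outside a large ball, then apply (UCP) to propagate the vanishing back through $K_{ext}$.

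The first step is to reduce the perturbed equation to the unperturbed one outside a compact set. Because $\hat H_0$ is a finite-range difference operator on $\Z^3$ and $\hat D_p$ acts by pointwise multiplication, the values $(\hat H^{D_p}\hat u)(n)$ and $(\hat H^D \hat u)(n)$ agree at every node $n$ whose nearest-neighbor star avoids $\supp(\hat D_p-\hat D)$. Since $\hat V := \hat D_p-\hat D$ has compact support and $K_{int}$ is compact, I would choose a compact set $K\subset \Z^3$ containing $K_{int}$ together with a one-step neighborhood of $\supp \hat V$; the hypothesis $(\hat H^{D_p}-\lambda)\hat u=0$ on $K_{ext}$ then yields $(\hat H^D-\lambda)\hat u=0$ on $\Z^3\setminus K$.

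The second step is immediate: since $\hat u\in \calB_0^*(\Z^3;\CC^6)$ satisfies the unperturbed Maxwell equation outside the compact set $K$, and (RT) for $\hat H^D$ at $\lambda$ is available by assumption, there exists $R>0$ such that $\hat u(n)=0$ whenever $|n|>R$. In particular $\hat u$ has compact support in $\Z^3$.

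The third and last step is to invoke Theorem~\ref{th.1.3}: because $\lambda\neq 0$ and $K_{int}$ is compact convex, the compactly supported $\hat u$ satisfying $(\hat H^{D_p}-\lambda)\hat u = 0$ in $K_{ext}$ must vanish in $K_{ext}$, which is the desired conclusion. I do not expect a genuine obstacle, since both main ingredients are supplied by hypothesis; the only point requiring minor care is the finite-range bookkeeping of the first step, which passes from the perturbed equation in $K_{ext}$ to the unperturbed equation outside a larger compact set so that (RT) can be applied.
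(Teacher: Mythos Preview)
Your proposal is correct and follows essentially the same strategy as the paper: reduce to $(\hat H^D-\lambda)\hat u=0$ outside a compact set, apply (RT) to get compact support, then invoke Theorem~\ref{th.1.3}. The only cosmetic difference is that the paper performs the reduction via the algebraic identity $(\hat H^D-\lambda)\hat u=\lambda\hat K\hat u+\hat D^{-1}\hat D_p\hat f$ with $\hat K=\hat D(\hat D_p)^{-1}-I$ compactly supported, whereas you argue directly from the finite-range structure; note also that since $\hat D_p$ multiplies \emph{after} $\hat H_0$, one actually has $(\hat H^{D_p}\hat u)(n)=(\hat H^D\hat u)(n)$ whenever $n\notin\supp\hat V$, so the one-step neighborhood is not even needed.
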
 
\begin{remark}
 The proof of Theorem \ref{th.1.3} shows that Theorem \ref{th.1.3} and so Corollary \ref{coro.1.4} 
 extend to the case where the perturbation $\hat D_p- \hat D$ is any operator of multiplication
 by real or complex valued function with compact support such that, for all $n$,
 $\hat D_p(n)$ is a self-adjoint positive matrix.
 Nevertheless, it is difficult to extend Corollary \ref{coro.1.4} to the operator $(\hatHD + \hat V-\lambda)$,
 where $\hat V$ is an operator of multiplication on ${\bf Z}^3$ with compact support,
 since (UCP) fails for $\hat H_0$ and for $\hatHD$.
 In fact, the  trigonometric polynomial, $\phi^+(x):=(y,0_{{\bf R}^3})$ (we can choose also $\phi^-(x):=(0_{{\bf R}^3},y)$)
 $\in {\bf R}^6$ is an eigenmode for $H_0$ (and so for $H^D$)
 associated with the eigenvalue 0, then (UCP) obviously fails for $\hat H_0$ in the whole space ${\bf Z}^3$.
 We then have the following general negative result.\\
 \indent
 Let $\lambda\in\R$. Then there exists a compactly supported multiplication operator $\hat V$
 and a compactly supported eigenmode $\hat u$ of $\hat H_0+\hat V$ associated
 with  the eigenvalue $\lambda$, i.e.,
\[ (\hat H_0+\hat V-\lambda)\hat u=0.\]
\indent
 In fact, setting $u=\phi^+$ and $\hat V=\lambda \cdot 1_{\supp \hat u}$, the conclusion follows.\\
\end{remark}

\subsection{Plan of the paper}
 In \S\ref{sec.2} we give the definition of the discrete Maxwell operator, and recall its main spectral properties.
 In \S\ref{sec3} we prepare Theorem \ref{th.31} which is a very slight extension of results
 in \cite{ISO.REL}, \cite{AND.SPE2}. We then give a proof of Theorem \ref{th.31}.
 In \S\ref{sec.Fermi} we describe the Fermi surface $M^{\bf C}(q(\cdot;\lambda))$.
 The analysis begins with the description of the Fermi surface $M^{\bf C}(p(\cdot;\lambda))$
 which is simpler since the function $p(\cdot;\lambda)$ is a polynomial of second order
 of the variable $z$.  The key is Lemma \ref{l.topo1}.
 In \S\ref{sec.Proof-main} we give the proofs of Theorems \ref{th.2}, \ref{th.3}, \ref{th.1.3},
 Proposition \ref{prop.1} and Corollary\ref{coro.1.4}.

\subsection*{Notations}
 We use the following notations.
 Letting two sets $E,F$, we denote by $\calA(E;F)=F^E$ the set of applications from $E$ into $F$.
 For $n=(n_1,\ldots,n_d)\in{\bf Z}^d$ we set $|n|=\sum_{j=1}^d |n_j|$.
 For a map $f : {\bf R} \to {\bf R}$ (respect., a function from $\T^1$ into ${\bf R}^1$),
 we denote by $f$ again the mapping ${\bf R}^d$ (respectively, $\T^d$) $\ni x \mapsto (f(x_1),\ldots, f(x_d))\in {\bf R}^d$.
 If $E\subset\T^d$ or $E\subset{\bf R}^d$ we denote by $f(E)$ the set $\{f(x)$ $\mid$ $x\in E\}$.
 In particular we set, for $x\in \T_{\bf C}^d\cup {\bf C}^d$,
\begin{equation}
\label{y=sinxformula}
\begin{aligned}
  y &:= \sin x =(\sin x_1,\dots, \sin x_d), \\
  z &:= (\sin^2x_1, \dots, \sin^2x_d). 
\end{aligned}
\end{equation}
 We use the abreviated expressions "iff" which means "if and only if" and "$+$ {\rm c.p.}" in a computation
 to indicate that the value is the addition of the two other similar terms obtained by two successive
 permutations of $1\to 2\to 3\to 1$. For example, an expression written as $p_1+q_{23}+$ c.p.
 is the sum equal to $p_1+q_{23}+p_2+q_{31}+p_3+q_{12}$.
 For $\tilde E\subset \T_{\bf C}^d\cup{\bf C}^d$, we set $\sin^2(\tilde E)=\{z\in{\bf C}^d$ $\mid$ $x\in \tilde E\}$,
 and letting $E\subset {\bf C}^d$ we set $(\sin_{\bf C}^2)^{-1}(E)=\{x\in{\bf C}^d$ $\mid$ $z\in E\}$
 and $(\sin_\T^2)^{-1}(E)=\{x\in\T_{\bf C}^d$ $\mid$ $z\in E\}$.
 We denote by $\calC^\infty_c(E)$ the space of real $\calC^\infty$ functions defined in $E$
 with compact support in $E$.
 For $m\ge 1$ we denote by $(e_1,\ldots,e_m)$ the standard basis of ${\bf R}^m$ or of ${\bf C}^m$.
 The spaces ${\bf R}^m$, ${\bf C}^m$ are equipped with the inner product
\[
\begin{aligned}
 <f,g>_{{\bf R}^m}  & :=   \sum_{j=1}^m f_jg_j, \quad f=(f_j)_{1\le j \le m},\; g=(g_j)_{1\le j \le m}, \\
 <f,g>_{{\bf C}^m} & :=   \sum_{j=1}^m f_j\ove{g_j}, \quad f=(f_j)_{1\le j \le m},\; g=(g_j)_{1\le j \le m}.
\end{aligned}
\]
%
 (The formula for $<f,g>_{{\bf R}^m}$ extends to the case where $f_j$ and $g_j$ are complex valued.)
 If $E\subset {\bf R}^m$ (respect., $E\subset {\bf C}^m$) then we denote
 $E^\perp:=\{g\in {\bf R}^m$ $\mid$ $<f,g>_{{\bf R}^m}=0$, $\forall f\in E\}$ (respect.,
 $E^{\perp_{\bf C}}:=\{g\in {\bf C}^m$ $\mid$ $<f,g>_{{\bf C}^m}=0$, $\forall f\in E\}$).
 The Besov space $\calB_0^*(\T^d;{\bf C}^m)$,  simply denoted by $\calB_0^*(\T^d)$,
 is the space of tempered distributions $u\in\calS'(\T^d;{\bf C}^m)$ such that $\hat u\in \calB_0^*({\bf Z}^d)$.

 If $\calA$ is a $n\times p$ matrix with real coefficients we denote
 $\Ima\calA:=\calA({\bf R}^p) \subset {\bf R}^n$, $\Ima_{\bf C}\calA:=\calA({\bf C}^p)\subset{\bf C}^n$,
 $r= \rg(\calA):= {\rm dim}(\Ima\calA)$.

\section{The discrete-Maxwell Operator}
\label{sec.2}
\subsection{Description by Fourier series}
 Let ${\bf Z}^3=\{n=(n_1,n_2,n_3)$ $\mid$ $n_j\in {\bf Z}\}$, $\T^3\approx (\R/(2\pi {\bf Z}))^3$ and $U$ be
 the unitary transform between $L^2(\T^3)$ and $l^2({\bf Z}^3)$:
\[ (Uf)(n) := \hat f (n) = (2\pi)^{-\frac32} \int_{\T^3} e^{inx} f(x) \, \rmd x, \quad n\in{\bf Z}^3,\]
 so that any $f\in L^2(\T^3)$ can be written
\[
 f(x) = (U^* \hat f)(x) := (2\pi)^{-\frac32}\sum_{n\in{\bf Z}^3} e^{-inx} \hat f(n), \quad x\in\T^3.
\]
 The isotropic discrete Maxwell operator is the bounded operator $H_0$ on $\calH=(L^2(\T^3;{\bf C}))^6$
 defined by
\[ \hat H_0 = UH_0U^* , \]
 where $H_0$ is the operator of multiplication by the real symmetric $6\times 6$ matrix:
\begin{equation}
\label{def.matH0}
 H_0(x) := \left(\begin{array}{cc} 0_{3\times3} & \tilde M(y)\\ -\tilde M(y) & 0_{3\times3} \end{array}\right)
  \in {\bf R}^6,  \quad x\in\T^3,
\end{equation}
 where $y=\sin x$ (see \eqref{y=sinxformula}) and $\tilde M(y)$ is the real anti-symmetric $3\times 3$ matrix:
\[
 \tilde M(y) := \left(\begin{array}{ccc}
  0 & -y_3 & y_2\\ y_3 &0 & -y_1\\ -y_2 & y_1 & 0 \end{array}\right) , \quad y\in {\bf R}^3.
\]
 Then $H_0$ is a bounded self-adjoint operator on $\calH =  L^2(\T^3,\rmd x;{\bf C}^6)$ 
 with the inner product
\[ (u,v) :=  \int_{\T^3} <u(x),v(x)>_{{\bf C}^6} \,\rmd x  =  \int_{\T^3} \sum_{j=1}^6 u_j(x)\ove{v_j(x)}\, \rmd x .\]
 The anisotropic discrete-Maxwell operator is defined by
\[ \hatHD := \hat D \hat H_0, \]
  with
\[
 \hat D := \left(\begin{array}{cc} \bfepsl & 0_{3\times3} \\ 0_{3\times3} &\bfmu
  \end{array}\right).
\]
 We set $H^D :=U^*\hatHD U$ so, since $\hat D$ is constant,
\[
 H^D = U^*(\hat D \hat H_0)U = \hat D U^* \hat H_0 U = \hat D H_0.
\]
 The relation
\[
  (\hat D^{-1} H^D u,v)_{\calH}= (H_0 u,v)_{\calH}
\]
 shows that the operator $H^D$ is bounded and self-adjoint on the space $\calHD=\calH$
 equipped with the hilbertian product
\[ (u,v)_{\calHD} := (\hat D^{-1}u,v) = \int_{\T^3} <\hat D^{-1}u(x),v(x)>_{{\bf C}^6} \,\rmd x.\]
%
 We write
\[ H^D = \int_{\T^3}^{\oplus} H^D(x) \,\rmd x, \]
 where $H^D(x)$ is self-adjoint on ${\bf C}^6$ equipped with the inner product
\[
   <u(x),v(x)>_{{\bf C}^6,D} := <\hat D^{-1}u(x),v(x)>_{{\bf C}^6}.
\]

\subsection{Spectrum of $H^D(x)$}
 Since $H^D(x)$ depends only on the variable $y=\sin x$ we write $H^D(x)=h^D(y)$.
 In this subsection we consider a more general case $y\in {\bf R}^3$, and let $z_j=y_j^2$,
 $z\in [0,+\infty)^3$.
 We recall some results in \cite{POI.SPE} using the notations $\bfbeta$, $\bfalpha$ in \eqref{def.beta-alpha}.
 Since $\bfbeta\cdot \bfepsl=0$ and $\epsl_i>0$ for all $1 \leq i \leq 3$ then there exists
 $1 \leq j\leq 3$ such that $\beta_j \beta_i\le 0$ and $\beta_k \beta_i\ge 0$ for $i,k\neq j$.
 If two of the $\beta_j$'s vanish then $\bfbeta$ vanishes.
 Moreover $\bfbeta$ is replaced by $-\bfbeta$ if $\bfepsl$ and $\bfmu$ are exchanged,
 which involves the same analysis.
 Hence if $\bfbeta\neq0$ then we can assume without loss of generality that
 
 \medskip
 \noindent
 {\bf (A0)}: $\beta_1\ge \beta_2 > 0 >  \beta_3$ or $\beta_1> \beta_2 = 0 >  \beta_3$.

\medskip
 Let us observe that $\tau^{\pm}(z)$ defined by \eqref{taupmzdefine} satisfy
\[
   \tau^+(z)  \ge  \tau^-(z)  >0 \quad z\in [0,+\infty)^3\sauf\{0_{{\bf R}^3}\}.
\]
\begin{prop} (Spectrum of $h^D(y)$.)
\label{prop.spectreH}
 If $y=0_{{\bf R}^3}$ then $h^D(y)=0_{6\times 6}$.\\
 Let $y\in {\bf R}^3\sauf \{0_{{\bf R}^3}\}$. Then $0\in \sigma(h^D(y))$ has multiplicity two with eigenvectors
 $(y_1,y_2,y_3,0,0,0)= y\otimes 0_{{\bf C}^3}$ and $(0,0,0,y_1,y_2,y_3)= 0_{{\bf C}^3}\otimes y$.\\
 1) Assume $\bfbeta=0$ and $y\in {\bf R}^3 \sauf \{0_{{\bf R}^3}\}$.
 Then, $K_0\equiv 0$ and all the eigenvalues have multiplicity two.
 Moreover, the nonzero eigenvalues of $h^D(y)$ are
\[
 \pm\sqrt{\Psi_0(z)}=\pm \sqrt{\epsl_2\mu_3 z_1  + \epsl_3\mu_1 z_2+\epsl_1\mu_2 z_3},
\]
 where $z_j:=y_j^2$ hence $z\in [0,\infty)^3 \sauf \{0_{{\bf R}^3}\}$.

 2) Assume $\bfbeta\neq0$ (so (A0) holds) and $y\in {\bf R}^3 \sauf \{0_{{\bf R}^3}\}$.
 Then the nonzero eigenvalues of $h^D(y)$ are
\begin{itemize}
\item $\sqrt{\tau^+(z)}$ and $-\sqrt{\tau^+(z)}$, simple iff $K_0(z)\neq 0$,
\item $\sqrt{\tau^-(z)}$ and $-\sqrt{\tau^-(z)}$, simple iff $K_0(z)\neq 0$.
\item $\sqrt{\tau^+(z)}=\sqrt{\tau^-(z)}$ and  $-\sqrt{\tau^+(z)}=-\sqrt{\tau^-(z)}$, double iff $K_0(z)= 0$.
\end{itemize}
 If, in addition, $\beta_2=0$, then, $\tau^+$ and $\tau^-$ are linear with respect to $z$:
\[
\begin{aligned}
 \tau^+(z) &=&  \epsl_2\mu_3 z_1+ \epsl_3\mu_1 z_2 + \epsl_2\mu_1 z_3,\\
 \tau^-(z) &=& \epsl_3\mu_2 z_1+ \epsl_3\mu_1 z_2 + \epsl_1\mu_2 z_3.
\end{aligned}
\]
\end{prop}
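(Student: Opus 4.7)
The plan is to extract the spectrum of $h^D(y)$ almost entirely from the factored characteristic polynomial $p(z;\lambda)=\lambda^2(\tau^+(z)-\lambda^2)(\tau^-(z)-\lambda^2)$ already recalled in \eqref{val.pz}, combined with an explicit identification of the kernel. The case $y=0_{\R^3}$ is immediate since $\tilde M(0)=0$ forces $H_0=0$, hence $h^D(0)=0$. For $y\ne 0_{\R^3}$, the identity $\tilde M(y)y=y\times y=0$ shows that the two linearly independent vectors $y\otimes 0_{\CC^3}$ and $0_{\CC^3}\otimes y$ belong to $\ker h^D(y)$, and matching this with the factor $\lambda^2$ in $p(z;\lambda)$, which contributes algebraic multiplicity exactly two at $\lambda=0$ (the inequality $\tau^+\ge\tau^->0$ for $z\ne 0$ recalled just before the proposition rules out any hidden extra factor of $\lambda^2$), pins $\ker h^D(y)$ down as their span.

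The remaining eigenvalues $\pm\sqrt{\tau^\pm(z)}$ are read off from the quadratic factors, and the same inequality guarantees they are all nonzero. Because $h^D(y)=\hat D H_0(y)$ is self-adjoint with respect to the positive definite form $\langle\cdot,\cdot\rangle_{\CC^6,D}$, it is diagonalizable, so algebraic and geometric multiplicities coincide. Since $\tau^+-\tau^-=2\sqrt{K_0}$, the values $\tau^\pm$ coincide iff $K_0(z)=0$, in which case $\pm\sqrt{\tau^+}=\pm\sqrt{\tau^-}$ become double eigenvalues; otherwise the four nonzero values are distinct and simple. In case 1), $\bfbeta=0$ makes $K_0\equiv 0$ by \eqref{def.Psi0}, so $\tau^\pm=\Psi_0$ and each nonzero eigenvalue has multiplicity two; the stated form of $\Psi_0$ then follows by substituting $\bfbeta=0$ into the definitions of the $\alpha_j$. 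In case 2), $\bfbeta\ne 0$ and assumption (A0) is in force, so the three bulleted statements follow directly from the simplicity/doubling dichotomy just described.

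For the linear simplification when $\beta_2=0$, substitution into \eqref{def.Psi0} gives
\[
K_0(z)=\tfrac14\bigl(\beta_1^2 z_1^2+\beta_3^2 z_3^2\bigr)-\tfrac12\beta_1\beta_3 z_1z_3=\tfrac14(\beta_1 z_1-\beta_3 z_3)^2,
\]
and under (A0) with $\beta_2=0$ one has $\beta_1>0>\beta_3$ and $z_i\ge 0$, so $\sqrt{K_0(z)}=\tfrac12(\beta_1 z_1-\beta_3 z_3)$; then $\tau^\pm=\Psi_0\pm\sqrt{K_0}$ is linear in $z$, and a short rearrangement using $\beta_1=\epsl_2\mu_3-\epsl_3\mu_2$, $\beta_3=\epsl_1\mu_2-\epsl_2\mu_1$, together with the identity $\epsl_3\mu_1=\epsl_1\mu_3$ implied by $\beta_2=0$, produces the announced formulas. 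The step I expect to require the most care is the multiplicity bookkeeping in the two degenerate branches ($\bfbeta=0$, and $K_0=0$ with $\bfbeta\ne 0$): the claim of multiplicity two refers to geometric multiplicity, and invoking the self-adjoint structure of $h^D(y)$ with respect to the $D$-weighted inner product is what promotes the algebraic multiplicity supplied by $p(z;\lambda)$ to the required geometric multiplicity.
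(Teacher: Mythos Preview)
Your argument is correct. Note, however, that the paper does not give its own proof of this proposition: both the factorization \eqref{val.pz} and Proposition~\ref{prop.spectreH} are explicitly \emph{recalled} from \cite{POI.SPE} (see the sentence ``We recall some results in \cite{POI.SPE}\ldots'' preceding the proposition), so there is no in-paper proof to compare against. What you have written is a clean derivation of the proposition from the already-quoted characteristic polynomial \eqref{val.pz} together with the inequality $\tau^+(z)\ge\tau^-(z)>0$ for $z\ne 0$, both of which the paper imports from the same reference; your use of the $D$-weighted self-adjointness of $h^D(y)$ to equate algebraic and geometric multiplicities is exactly the right bridge, and the $\beta_2=0$ simplification via $K_0(z)=\tfrac14(\beta_1 z_1-\beta_3 z_3)^2$ is carried out correctly.
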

 By Proposition \ref{prop.spectreH},  we observe that: If $(z_1,z_3)\neq 0_{{\bf R}^2}$,
 then the nonzero eigenvalues of $h^D(y)$ are $\pm\sqrt{\tau^+(z)}$, simple.
 If $(z_1,z_3)= 0_{{\bf R}^2}$ and $z_2\neq 0$, then the nonzero eigenvalues of $h^D(y)$ are
\[
 \pm\sqrt{\tau^+(z)}=\pm\sqrt{\tau^-(z)}=\pm \sqrt{\alpha_2} |y_2| =\pm \sqrt{\epsl_3\mu_1} |y_2| ,
\]

\subsection{Spectrum of $H^D$}
 We set
\[
 \lambda_\pm := \max_{[0,1]^3} \sqrt{\tau^\pm(z)} .
\]
 The following result is proved in \cite{POI.SPE}.
\begin{prop}
\label{p.HD-spectre}
(1)  The spectrum of $H^D$ is $\sigma(H^D) = [-\lambda_+,\lambda_+]$. \\
\noindent
(2) The pure point spectrum is $\sigma_{pp}(\hatHD)=\{0\}$ and
 the eigenvalue $0$ of $\hatHD$ has infinite multiplicity. \\
 \noindent
 (3) The singular continuous spectrum of $\hatHD$ is empty.
\end{prop}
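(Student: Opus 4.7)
The plan is to exploit the direct integral decomposition $H^D=\int_{\T^3}^\oplus H^D(x)\,dx$ together with the explicit fiberwise spectral information provided by Proposition \ref{prop.spectreH}. Since $H^D$ acts as the multiplication operator by the symmetric matrix $H^D(x)$, standard theory for decomposable operators identifies $\sigma(H^D)$ with the essential union $\bigcup_{x\in\T^3}\sigma(H^D(x))$, $\sigma_{pp}(H^D)$ with the eigenvalues $\lambda$ such that $\{x:\lambda\in\sigma(H^D(x))\}$ has positive measure, and reduces the absence of singular continuous spectrum to a regularity statement about the band functions.

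For part (1), I would observe that by Proposition \ref{prop.spectreH}, $\sigma(H^D(x))\subset\{0,\pm\sqrt{\tau^\pm(z)}\}$ with $z=\sin^2 x\in[0,1]^3$, and that the continuous function $\tau^+:[0,1]^3\to[0,\infty)$ vanishes at $z=0$ and attains the value $\lambda_+^2$ by definition. By connectedness of $[0,1]^3$ and continuity of $\tau^+$, the image of $\sqrt{\tau^+}$ is exactly $[0,\lambda_+]$, and symmetry about $0$ (readable from the block anti-symmetric form of $H_0(x)$) gives $\sigma(H^D)\supset[-\lambda_+,\lambda_+]$. The reverse inclusion is immediate from $\tau^+(z)\le\lambda_+^2$ for $z\in[0,1]^3$.

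For part (2), suppose $u\in\calH^D$, $u\neq 0$, satisfies $H^D u=\lambda u$. Then $(H^D(x)-\lambda)u(x)=0$ for almost every $x\in\T^3$, which forces $u(x)\in\ker(H^D(x)-\lambda)$. For $\lambda\neq 0$, $u(x)\neq 0$ only on the set $\{x\in\T^3\mid\lambda^2=\tau^+(z)\text{ or }\lambda^2=\tau^-(z)\}$; this is the vanishing locus of the nontrivial real-analytic (indeed polynomial) functions $\tau^\pm(z)-\lambda^2$ composed with $x\mapsto\sin^2 x$, hence has Lebesgue measure zero, so $u=0$ a.e., a contradiction. Thus $\sigma_{pp}(H^D)\subset\{0\}$. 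Conversely, for any scalar $f\in L^2(\T^3)$, the function $u(x):=f(x)(y(x),0_{\CC^3})$ lies in $\ker H^D$ by the kernel description in Proposition \ref{prop.spectreH}, producing an infinite-dimensional eigenspace at $0$.

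For part (3), the cleanest route is to apply the abstract theory of analytically fibered self-adjoint operators (à la Gérard--Nier): on the complement of the critical-value set of the band functions $\pm\sqrt{\tau^\pm(z(x))}$, the spectral measure of $H^D$ is purely absolutely continuous, while the set of critical values has Lebesgue measure zero by Sard's theorem. Combined with part (2), which already captures $\{0\}$ as the only pure-point contribution (and the critical values reduce to $\{0,\pm\lambda_-,\pm\lambda_+\}$ together with the branching locus $\{K_0(z)=0\}$, all pushed forward into a measure-zero subset of $\R$), this yields $\sigma_{sc}(H^D)=\emptyset$. The main obstacle is the last step: handling the codimension-one branching set $\{K_0=0\}$ where $\tau^+$ and $\tau^-$ coincide and the eigenvalues of $H^D(x)$ cease to be smooth. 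I would address it by desingularizing $\sqrt{K_0(z)}$ via a local real-analytic change of variables and then applying the limiting absorption principle on each smooth stratum, so that the singular continuous part is ruled out up to the measure-zero critical set.
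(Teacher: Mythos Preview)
The paper does not prove this proposition at all: it is quoted from \cite{POI.SPE}, where the absence of nonzero eigenvalues and of singular continuous spectrum is established via the conjugate operator (Mourre) method and the limiting absorption principle. Your sketch therefore supplies an independent argument rather than reproducing the paper's.

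For (1) and (2) your direct-integral reasoning is sound. One correction: $\tau^\pm(z)=\Psi_0(z)\pm\sqrt{K_0(z)}$ are neither polynomial nor real-analytic across $\{K_0=0\}$, so the measure-zero claim in (2) should be made for the zero set of the genuine polynomial $q(x;\lambda)=\lambda^2(\tau^+(z)-\lambda^2)(\tau^-(z)-\lambda^2)$, which is nontrivial for $\lambda\neq 0$ and contains both level sets. With that fix the argument goes through.

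For (3) the G\'erard--Nier route is legitimate and close in spirit to what \cite{POI.SPE} actually does. Your diagnosis of the ``main obstacle'' is slightly off, however: that machinery takes as input the real-analytic matrix-valued map $x\mapsto H^D(x)$, not its individual eigenvalue branches, so the non-smoothness of $\sqrt{K_0}$ on $\{K_0=0\}$ is absorbed automatically by the stratification of the characteristic variety $\{(x,\lambda):\det(H^D(x)-\lambda)=0\}$; no ad hoc desingularization of $\sqrt{K_0}$ is required. What does need work is checking that the resulting threshold set in $\R$ is Lebesgue-null (indeed discrete), and your last paragraph is too thin to count as a proof of that. The reference \cite{POI.SPE} carries this out explicitly via a Mourre estimate; your outline is a correct plan for an alternative write-up but not yet a proof.
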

\begin{remark}
 We have, in addition,

\smallskip
\noindent
(1) $\lambda_+ =\sqrt{\tau^+(1,1,1)}$, $\lambda_- = \max \{\sqrt{\tau^-(1,1,1)},\sqrt{\tau^-(1,1,0)}\}$. \\
\noindent
(2) If $\bfbeta\neq0$ then $\lambda_+ >\lambda_-$.
\end{remark}
%
%
\section{A general tool for the proof of Rellich's property}
\label{sec3}
 We apply to \eqref{e.qu=g} the following general result.
\begin{theor}
\label{th.31}
 Let $m\ge 1$, $Q$ a scalar trigonometric polynomial of  $x\in \T^d$, $d\ge 2$ satisfying
 $\ove{Q(x)}=Q(x)$, $x\in \T^d$.
 Consider the complex $d$-dimensional torus $\T^d_{\bf C}$, extend analytically $Q$
 to $\T^d_{\bf C}$, and set
\[
\begin{aligned}
 M^{\bf C}(Q) &:=& \{x\in \T^d_{\bf C} \; \mid \; Q(x) =0\}, \\
 M^{{\bf C}}_{sgn}(Q) &:=& \{x\in \T^d_{\bf C}\; \mid \; Q(x) =0, \; \nabla Q(x)=0\}, \\
 M^{{\bf C}}_{reg}(Q) &:=& M^{{\bf C}}(Q) \sauf M^{{\bf C}}_{sgn}(Q).
\end{aligned}
\]
 Assume\\
 {\bf (A-1-1)}$(Q)$: $M^{{\bf C}}_{sgn}(Q)$ ($\subset \T_{\bf C}^d \approx(\T\times{\bf R})^d$) has Hausdorff
 ($2d-1$)-measure zero,\\
 {\bf (A-1-2)}$(Q)$: $M^{{\bf C}}_{sgn}(Q)\cap \T^d$ is discrete,\\
 {\bf (A-2)}$(Q)$: Each connected component of $M^{{\bf C}}_{reg}(Q)$ intersects $\T^d$ and
 the intersection is a $(d-1)$-dimensional real analytic submanifold of $\T^d$.
 
 Let $u\in \calB_0^*(\T^d)$ be a solution of the equation
\begin{equation}
\label{e2.Qu=g}
 Qu = g \quad {\rm on} \quad \T^d ,
\end{equation}
 where $g$ is a trigonometric polynomial of  $x\in \T^d$ with values in ${\bf C}^m$.
 Then $u$ is a trigonometric polynomial.
\end{theor}
\begin{proof}
 Lemma 5.2 of \cite{AND.SPE2} shows that $u\in \calC^\infty(\T^d\sauf M^{{\bf C}}_{sgn}(Q))$
 and $g=0$ on $M^{{\bf C}}_{reg}(Q)\cap \T^d$.
 Lemma 5.3 of \cite{AND.SPE2} shows that $g=0$ on $M^{{\bf C}}_{reg}(Q)$. 
 Let us define the meromorphic function $v(x):=g(x)/p(x)$ for $x\in \T^d_{\bf C}\sauf M^{\bf C}(Q)$.
 The proof of \cite[Lemma 5.4]{AND.SPE2}) shows that $v$ extends continuously to $\T^d_{\bf C}$.
 In fact this proof is based on Assumption (A-1)'($Q$) (i.e., $M^{{\bf C}}_{sgn}(Q)$ is discrete)
 which is stronger than (A-1-1)-(A-1-2)($Q$).
 However \cite[Lemma 5.3]{AND.SPE2}, which does not use Assumption (A-1)'($Q$),  implies
 that $v$ is analytic near $M^{{\bf C}}_{reg}(Q)$ and the singularities of $v$ are localized
 in $M^{{\bf C}}_{sgn}(Q)$. Thanks to (A-1-1), the closed set $M^{{\bf C}}_{sgn}(Q)$ is negligible 
 in the sense of Shiffman in \cite{SHI.REM1} and \cite{SHI.REM2}, which means that $v$ extends analytically
 to $\T^d_{\bf C}$ (see also \cite{CAC.SUL} and \cite{STO.REM}).
 
 Let us set $u'=u-v|_{\T^d}$. Thus, $u'$ belongs to $\calB_0^*(\T^d)$ and satisfies
 $\supp u'\subset \T^d\cap M^{{\bf C}}_{sgn}(Q)$. Therefore, thanks to Assumption (A-1-2), 
 $\supp u'$ is discrete.
 Let us prove  $u'=0$.
 In fact, if $u'\neq 0$, then there exists $x^*\in \supp u'$.
 Let $\chi$ be a smooth cut-off function on $\T^d$ such that
 $\supp (\chi u')\subset \{x^*\}$ and $\chi(x^*)\neq 0$.
 We then use the following equivalent characterization of $\calB_0^*(\T^d)$ (see \cite{ISO.REL}).
 Take a $\calC^\infty$-partition of unity $\{\chi_l\}_{1\le l\le L}$ on $\T^d$ where the support
 of $\chi_l$ is sufficiently small. Then
\[ \calB_0^*(\T^d;{\bf C}^m) := \{w\in \calS'(\T^d;{\bf C}^m)\; \mid \; \chi_l w \in\calB_0^*({\bf R}^d;{\bf C}^m) \} ,\]
 where
\[
 \calB_0^*({\bf R}^d;{\bf C}^m) := \{w\in \calS'({\bf R}^d;{\bf C}^m)\; \mid \;
 \lim_{R\to +\infty} \frac1R \int_{|\xi|<R} |w(\xi)|^2 d\xi =0 \}.
\]
 Let $l$ be such that $\chi_l(x^*)\neq 0$.
 Thus $u'':=\chi_l\chi u'$ belongs to $\calB_0^*({\bf R}^d;{\bf C}^m)$ and is a finite linear combination
 of derivatives of Dirac distribution $\delta_{x^*}$:
\[ u''= \sum_{|\alpha|\le N} b_\alpha \partial_\alpha \delta_{x^*},\] 
 where $N\ge 0$ and $\{b_{\alpha}\}_{|\alpha|=N}\neq \{0\}$. 
 Let $ \tilde u''_N(\xi)$ be the Fourier transform of $u''_N:=$ $\sum_{|\alpha|= N} b_\alpha \partial^\alpha \delta_{x^*}$.
 It satisfies
\[
 \tilde u''_N(\xi) := (2\pi)^{-d/2}\sum_{|\alpha|= N} b_\alpha \xi^\alpha e^{i\xi x^*}, \quad \xi\in{\bf R}^d .
\]
 Putting $\xi=|\xi|\om$ with $\om\in {\mathbb S}^{d-1}$, we get
\begin{eqnarray*}
 \frac1R \int_{|\xi|<R}  |\tilde u''_N(\xi)|^2 \, \rmd \xi &=& 
 (2\pi)^{-d/2} \frac1R \int_{0}^R \Big(\int_{{\mathbb S}^{d-1}}
  |\sum_{|\alpha|= N} b_\alpha \om^\alpha |^2 \, \rmd\om \Big) t^{2N+d-1} \, \rmd t \\
 &=& (2\pi)^{-d/2} R^{2N+d-2} \int_{{\mathbb S}^{d-1}} |\sum_{|\alpha|= N} b_\alpha \om^\alpha |^2 \,\rmd\om \\
 &=& CR^{2N+d-2}
\end{eqnarray*}
 with $C>0$, since $\{b_\alpha\}_{|\alpha|=N}\neq \{0\}$.
 A similar calculation shows that
\[
 \frac1R \int_{|\xi|<R}  |\tilde u''(\xi)- \tilde u''_N(\xi)|^2 \, \rmd \xi =O(R^{2N+d-4}).
\] 
 We thus have 
\[
 \frac1R \int_{|\xi|<R}  |\tilde u''(\xi)|^2 \, \rmd \xi \approx R^{2N+d-2}.
\]
 It contradicts $u'' \in \calB_0^*({\bf R}^d;{\bf C}^m)$.
 Hence $u'=0$ and $u=v|_{\T^d}$ is a trigonometric polynomial. 
\end{proof}

\begin{remark}
 1) Theorem \ref{th.31} is a  slight improvement of results in \cite{AND.SPE2} and \cite{ISO.REL},
 where Assumption (A-1-1)-(A-1-2) is replaced by: (A-1)'($Q$): $M^{{\bf C}}_{sgn}(Q)$ is discrete.
 
 2) Theorem \ref{th.2} for the second case follows from Conditions (A-1-1)-(A-1-2)  
 by taking $Q$ to be $q(\cdot;\lambda)$ in Theorem \ref{th.31} but not from Condition
 (A-1)'($q(\cdot;\lambda)$) of \cite{AND.SPE2} and \cite{ISO.REL}, which is not fullfield.
\end{remark}
%
%

\section{Properties of the complex Fermi variety}
\label{sec.Fermi}
\subsection{Properties in the complex variable $z$}
 We consider $\lambda\in {\bf R}^*$. Thanks to \eqref{val.pz} we write
\begin{equation}
\label{form.pz}
 p(z;\lambda)/\lambda^2   =  Az\cdot z + \bfb\cdot z +c \quad z\in{\bf C}^3,
\end{equation}
 where we put
\begin{equation}
\label{def.A}
 A := \left( \begin{array}{ccc}
 \gamma_1 & \epsl_3\mu_3 \alpha_3 & \epsl_2\mu_2 \alpha_2 \\
  \epsl_3\mu_3 \alpha_3  & \gamma_2 & \epsl_1\mu_1 \alpha_1 \\
  \epsl_2\mu_2 \alpha_2  & \epsl_1\mu_1 \alpha_1 & \gamma_3
  \end{array} \right), 
 \quad \bfb := - 2\lambda^2 \bfalpha  , \quad  c := \lambda^4,
\end{equation}
 and $\gamma_i$ is defined by \eqref{def.beta-alpha}.
%
%


\medskip
 We state analytic properties of the variety defined by the polynomial $p(\cdot ;\lambda)$ in a slightly general form. 
 We consider a polynomial of the form $P(z)= \calA z \cdot z+\bfb\cdot z+c$,
 $z\in {\bf C}^d$, where $\calA$ is a real symmetric $d\times d$ matrix, $\bfb\in {\bf R}^d$, $c\in\R$.
 We set the complex analytic variety $M^{\bf C}(P) :=\{z\in{\bf C}^d$ $\mid$ $P(z)=0\}$,
 $M^{\bf C}_{sgn}(P) :=\{z\in M^{\bf C}(P)$ $\mid$ $\nabla P(z)=0\}$ the singular part and
 $M^{\bf C}_{reg}(P):=M^{\bf C}(P)\sauf M^{\bf C}_{sgn}(P)$ the regular part.
\begin{lemma}
\label{l.Az2BzC} 
(1)  Assume $\calA=0$, $\bfb\neq 0$.
 Then $M^{\bf C}_{sgn}(P) = \emptyset$ and $M^{\bf C}_{reg}(P)=M^{\bf C}(P)$ is a $(d-1)$-dimensional connected
 analytic submanifold of ${\bf C}^d$. 
 In addition,  $M^{\bf C}_{reg}(P)\cap{\bf R}^d$ is a $(d-1)$-dimensional real analytic submanifold of ${\bf R}^d$. \\
 \noindent
(2) Assume $\calA\neq0$. Then:\\
\noindent
(2-1) Assume $\bfb\not\in \Ima \calA$.
 Then, $M^{\bf C}_{sgn}(P)=\emptyset$, $M^{\bf C}(P)=M^{\bf C}_{reg}(P)$ is an analytic connected manifold
 of dimension $d-1$ and the intersection $M^{\bf C}_{reg}(P)\cap{\bf R}^d$ is a $(d-1)$-dimensional
 real analytic submanifold of ${\bf R}^d$.

\noindent
(2-2) Assume $\bfb \in \Ima \calA$. 
 Then there exists a unique vector $z^0\in\Ima_{\bf C} \calA$ such that $\calA z^0= -\bfb/2$.
 In addition, $z^0\in {\bf R}^d$. 
 Furthermore, setting $c^*:= \calA z^0 \cdot z^0 = -\bfb z^0/2$ and $c':=c-c^*$, 
 the following assertions (2-2-1) and (2-2-2) hold.\\
%
\noindent
(2-2-1) Assume $c \neq c^*$. 
  Then $M^{\bf C}_{sgn}(P)$ is void and $M^{\bf C}(P)=M^{\bf C}_{reg}(P)$. Moreover,
  \begin{enumerate}
   \item Assume $0<s=r$ and $c > c^*$, or $0=s<r$ and $c < c^*$.
   Then, $M^{\bf C}(P)$ does not intersect ${\bf R}^d$.
   \item Assume $0\le s<r$ and $c > c^*$, or $0<s\le r$ and $c < c^*$.\\
   - Assume $r\ge 2$. Then $M^{\bf C}(P)$ is a $(d-1)$-dimensional connected complex analytic submanifold
   of ${\bf C}^d$ and $M^{\bf C}(P)\cap {\bf R}^d$ is a $(d-1)$-dimensional real analytic submanifold of ${\bf R}^d$.\\
   - Assume $r=1$. Assume $s=0$ and $c > c^*$ or $s=1$ and $c < c^*$.
    Then $M^{\bf C}(P)$ has two connected components which are $(d-1)$-dimensional connected
    complex analytic submanifolds of ${\bf C}^d$ whose intersection with ${\bf R}^d$ are
    $(d-1)$-dimensional real analytic submanifolds of ${\bf R}^d$.
  \end{enumerate}
 (2-2-2)  Assume $c = c^*$.
 Then,  $M^{\bf C}_{sgn}(P)$ is a $(d-r)$-dimensional complex analytic submanifold of ${\bf C}^d$
 and the intersection $M^{\bf C}_{sgn}(P)\cap{\bf R}^d$ is a $(d-r)$-dimensional real analytic
 submanifold of ${\bf R}^d$. Moreover,\\
 - Assume $r=d$.
 Then $M^{\bf C}_{sgn}(P)=\{0_{{\bf C}^d}\}$ and $M^{\bf C}_{reg}(P)$ is a connected $(d-1)$-dimensional
 submanifold of ${\bf C}^d$.
  \begin{enumerate}
   \item
 Assume $s=0$ (i.e. $\calA <0$) or $s=r$ (i.e. $\calA >0$). Then $M^{\bf C}_{reg}(P)\cap{\bf R}^d=\emptyset$.
   \item
 Assume $1 \leq s \leq d-1$ (and  $r=d$). 
 Then $M^{\bf C}_{reg}(P)\cap{\bf R}^d$ is a $(d-1)$-dimensional real analytic submanifold of ${\bf R}^d$.
   \end{enumerate}
  - Assume $r=d-1$.
  \begin{enumerate}
   \item
 Assume $s=0$ (i.e. $\calA \le 0$) or $s=r$ (i.e. $\calA \ge0$).
 Then $M^{\bf C}_{reg}(P)\cap{\bf R}^d = \emptyset$.
   \item
 Assume $1 \leq s\leq d-1$ (and  $r=d-1$).
 If $d>3$ then $M^{\bf C}_{reg}(P)$ is a connected $(d-1)$-dimensional submanifold of ${\bf C}^d$
 and $M^{\bf C}_{reg}(P)\cap{\bf R}^d$ is a $(d-1)$-dimensional real analytic submanifold of ${\bf R}^d$.
 If $d=3$ then $M^{\bf C}_{reg}(P)$ admits two connected components: 
 $M^{\bf C}_{reg}(P)=M^{\bf C}(P^+)' \cup M^{\bf C}(P^-)'$, where each $M^{\bf C}(P^\pm)'$ is a two-dimensional
 submanifold of ${\bf C}^3$ and each $M^{\bf C}(P^\pm)'\cap{\bf R}^3$ is a two-dimensional real analytic
 submanifold of ${\bf R}^3$. 
   \end{enumerate}

\end{lemma}
\begin{proof}
%
(1) (Case $\calA=0$, $\bfb\neq0$.) This is obvious.\\
\noindent
(2) (Case $\calA\neq0$.)
 We observe that if it is not void, then $M^{\bf C}_{reg}(P)$ is an analytic manifold of dimension $d-1$.
 If $M^{\bf C}_{sgn}(P)\neq\emptyset$, then $M^{\bf C}_{sgn}(P)$ is a smooth manifold, since its singular part,
\[ M^{\bf C}_{sgn,sgn}(P)=\{z\in {\bf C}^d\; \mid \; P(z)=0, \; \nabla P(z)=0, \; D^2 P(z)=0\} \]
 is necessarily void (we have $z\in M^{\bf C}_{sgn,sgn}(P)$ iff $\calA=0$, $b=0$, $c=0$, which is forbidden).
 Moreover, $M^{\bf C}_{sgn}(P)$ is characterized by the equations $\calA z=-\bfb/2,\quad  \bfb\cdot z/2+c=0$.
 Let $(s,r-s)$ be the signature of the quadratic form  $z\mapsto \calA z\cdot z$. 
 Gauss's reduction provides a {\em real} linear change of coordinates on $z$, so we can assume that
 $\calA$ is diagonal and $\calA z \cdot z=\sum_{j=1}^{s} z_j^2 - \sum_{j=s+1}^{r} z_j^2$.
%
%
%
%
 The equation of $M^{\bf C}(P)$ is then
\begin{equation}
\label{e.pz'=0}
 M^{\bf C}(P): \quad \sum_{j=1}^d a_j z_j^2 + \sum_{j=1}^d b_j z_j + c=0,
\end{equation} 
 with $a_j=1$ if $j\le s$, $a_j=-1$ if $s<j\le r$, $a_j=0$ if $j>r$.
 The equations of $M^{\bf C}_{sgn}$ are:
\[
 M^{\bf C}_{sgn}(P):\quad \sum_{j=1}^d a_j z_j^2 + \sum_{j=1}^d b_j z_j + c=0,
 \quad {\rm and} \quad 2a_j z_j+ b_j=0 \quad \forall j .
\]
%
%
(2-1) (Case $\bfb\not\in \Ima \calA$.) Hence, $r<d$ and there exists $j_0>r$ such that
 $b_{j_0}\neq 0$. 
 Thus $M^{\bf C}_{sgn}(P)=\emptyset$ and $M^{\bf C}(P)=M^{\bf C}_{reg}(P)$ is an analytic
 manifold of dimension $d-1$.
 Moreover the equation of $M^{\bf C}(P)$ becomes
\[  z_{j_0} = \frac{-1}{b_{j_0}}(\sum_{j=1}^r z_j^2 + \sum_{j=r+1, \; j\neq j_0}^d  b_j z_j + c), \]
 which shows that $M^{\bf C}(P)$ is connected and that $M^{\bf C}(P)\cap{\bf R}^d$ has dimension $d-1$.
 This proves the assertion.

\noindent
(2-2) (Case $\bfb \in \Ima \calA$.) 
 We denote $\Ima_{\bf C} \calA=\{\calA z$ $\mid$ $z\in {\bf C}^d\}$, $\ker_{\bf C} \calA=\{z\in {\bf C}^d$ $\mid$ $\calA z=0\}$
 and $\ker\calA=\ker_{\bf C}\calA \cap{\bf R}^d$. 
 We recall that, since $\calA$ is real symmetric, we then have
 $\Ima\calA:=\{\calA z$ $\mid$ $z\in{\bf R}^d\}=(\ker \calA)^\perp$, $\Ima_{\bf C}\calA=(\ker_{\bf C} \calA)^{\perp_{\bf C}}$,
 and
\[ {\bf R}^d=\Ima \calA \oplus \ker \calA  , \quad {\bf C}^d =\Ima_{\bf C} \calA \oplus \ker_{\bf C} \calA. \]
 Hence, $\calA$ is an automorphism on $\Ima_{\bf C} \calA$. It implies the existence and uniqueness
 of a vector $z^0\in\Ima_{\bf C} \calA$ such that $\calA z^0= -\bfb/2$. Since $\bfb$ is a real
 vector so is $z^0$.
 Set $c^*:= \calA z^0 \cdot z^0 = -\bfb z^0/2$, $c':=c-c^*$, and consider the translation $z':=z-z^0$.
 Then \eqref{e.pz'=0} becomes
\[ M^{\bf C}(P):\quad \sum_{j=1}^{s} {z'_j}^2 - \sum_{j=s+1}^{r} {z'_j}^2 + c'=0,\]
 and the equations of $M^{\bf C}_{sgn}(P)$ become
\[
  M^{\bf C}_{sgn}(P):\quad \sum_{j=1}^{s} {z'_j}^2 - \sum_{j=s+1}^{r} {z'_j}^2 + c'=0 \quad {\rm and} \ \ 
   z'_j=0  \ \  {\rm for}\ \  1\le j\le r,
\]
 i.e.,
\[
  M^{\bf C}_{sgn}(P):\quad c'=0 \quad {\rm and} \quad  z'_j=0  \ \  {\rm for}\ \ 1\le j\le r.
\]
%
(2-2-1) (Case $c \neq c^*$, i.e., $c'\neq0$.) We assume $c > c^*$, the other case being similar.
  Then, obviously, $M^{\bf C}_{sgn}(P)$ is void and $M^{\bf C}(P)=M^{\bf C}_{reg}(P)$.
  \begin{enumerate}
   \item  Assume $0<s=r$, i.e., $\calA\ge 0$. Obviously, $M^{\bf C}(P)$ does not intersect ${\bf R}^d$.
   \item  Assume $0\le s<r$.\\
   - Assume $r\ge 2$. 
  Since $M^{\bf C}(P)$ is an irreducible and regular algebric variety then it is $(d-1)$-dimensional and connected.
  In addition $M^{\bf C}(P)$ intersects ${\bf R}^d$; it contains the points $z\in{\bf R}^d$ such that
  $z'_r= \pm \sqrt{\sum_{j=1}^{s} {z'_j}^2 - \sum_{j=s+1}^{r-1} {z'_j}^2+c'}$
 and $\sqrt{\sum_{j=1}^{r-1} |z'_j|^2} < c'$.
 Hence $M^{\bf C}(P)$ is a $(d-1)$-dimensional real analytic submanifold of ${\bf R}^d$.\\
   - Assume $s=0$, $r=1$. Then
\[
 M^{\bf C}(P)  =  M^{\bf C}_+ \cup M^{\bf C}_-, \quad M^{\bf C}_\pm : =  \{z\in{\bf C}^d \; \mid \;  z'_1 = \pm \sqrt{c'} \}.
\]
 Clearly, $M^{\bf C}_+$ and $M^{\bf C}_-$ are the connected composants of $M^{\bf C}(P)$ and
 their intersection with ${\bf R}^d$ is $(d-1)$-dimensional.
  \end{enumerate}
(2-2-2) (Case $c = c^*$, i.e., $c'= 0$.)
 Then,  $M^{\bf C}_{sgn}(P)=\{z\in{\bf C}^d$ $\mid$ $z'_1=\ldots=z'_r=0\}$ and
 $M^{\bf C}_{sgn}(P)\cap{\bf R}^d=\{z\in{\bf R}^d$ $\mid$ $z'_1=\ldots=z'_r=0\}$ are affine spaces with same dimension $d-r$.\\
 - Assume $r=d$. Then $M^{\bf C}_{sgn}(P)=\{0_{{\bf C}^d}\}$.
 Since $d\ge 3$ then $M^{\bf C}_{reg}(P)$ is a irreducible and regular algebric variety and so
 it is a $(d-1)$-dimensional connected submanifold of ${\bf C}^d$.
 Clearly, $M^{\bf C}_{reg}(P)\cap{\bf R}^d$ is a regular subset of ${\bf R}^d$, but may be void.
  \begin{enumerate}
   \item
 Assume $s=0$ (i.e., $\calA <0$) or $s=r$ (i.e., $\calA >0$). Then $M^{\bf C}_{reg}(P)\cap{\bf R}^d=\emptyset$.
   \item
 Assume $1\le s\le  d-1$ (and  $r=d$). Then $M^{\bf C}_{reg}(P)$ intersects ${\bf R}^d$ since it contains
 the points $z\in{\bf R}^d$ such that $z'_d= \pm \sqrt{\sum_{j=1}^{s} {z'_j}^2 - \sum_{j=s+1}^{d-1} {z'_j}^2}$
 and $|z'_1| > \sqrt{\sum_{j=s+1}^{d-1} {z'_j}^2}$.
 Hence $M^{\bf C}_{reg}(P)\cap{\bf R}^d$ is a $(d-1)$-dimensional real analytic submanifold of ${\bf R}^d$.
   \end{enumerate}
 - Assume $r=d-1$.
  \begin{enumerate}
   \item
 Case $s=0$ or $s=r$ (i.e., $\calA \ge0$): the result is obvious.
   \item
 Assume $1\le s\le  d-1$ (and  $r=d-1$).\\
 -- Assume $d>3$. The proof is similar to the proof of case (ii)(B)($r=d$).\\
 -- Assume $d=3$ so $r=2$, $s=1$.
 We have $M^{\bf C}_{reg}(P)=M^{\bf C}_+(P)'\cup M^{\bf C}_-(P)'$ with
 $M^{\bf C}(P^\pm)' : = M^{\bf C}(P^\pm) \sauf M^{\bf C}_{sgn}(P)$,
 $M^{\bf C}(P^\pm) : =  \{z\in{\bf C}^3$ $\mid$ $z'_1 = \pm z'_2 \}$.
 Hence $M^{\bf C}(P^\pm)'  = \{z\in{\bf C}^3$ $\mid$ $z'_1 = \pm z'_2$ and $z'_2\neq 0\}$.
 Clearly, $M^{\bf C}(P^+)'$ and $M^{\bf C}(P^-)'$ are the connected composants of 
 the complex variety $M^{\bf C}(P)$ and each intersection $M^{\bf C}(P^\pm)'$ with ${\bf R}^3$ is
 a two-dimensional open set of the plane $\{z\in{\bf R}^3$ $\mid$ $z'_1 = \pm z'_2\}$.
   \end{enumerate}
 We have thus completed the proof of Lemma \ref{l.Az2BzC}.
\end{proof}
 We put
\begin{equation}
\label{def.g}
 \bfg :=(\epsl_1\mu_1\beta_1,\epsl_2\mu_2\beta_2,\epsl_3\mu_3\beta_3).
\end{equation}
\begin{lemma}
 We have the following relation:
\begin{equation}
\label{r.alphaImA}
 \bfalpha\cdot \bfg = \sum_{i=1}^3\epsl_i\mu_i\alpha_i\beta_i =  \frac12 \beta_1\beta_2\beta_3.
\end{equation}
\end{lemma}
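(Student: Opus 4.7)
The plan is a direct algebraic verification: I would rewrite both sides of \eqref{r.alphaImA} as signed sums of six monomials of joint multi-degree three in $\bfepsl$ and three in $\bfmu$, and then match them term by term.

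First I would exploit the factorization structure. Since $\bfbeta=\bfepsl\times\bfmu$ (the cross-product reading is the one forced by assumption (A0), as $\epsl_i\mu_i>0$ rules out the componentwise product), for each cyclic permutation $(i,j,k)$ of $(1,2,3)$ one has $\beta_i=\epsl_j\mu_k-\epsl_k\mu_j$, and by the definition of $\bfalpha$ one has $2\alpha_i=\epsl_j\mu_k+\epsl_k\mu_j$. The difference-of-squares identity $(a-b)(a+b)=a^2-b^2$ therefore gives
\[
 2\alpha_i\beta_i \;=\; (\epsl_j\mu_k)^2-(\epsl_k\mu_j)^2,
\]
so that, combining with the definition \eqref{def.g} of $\bfg$, the left-hand side of \eqref{r.alphaImA} becomes
\[
 2\,\bfalpha\cdot\bfg \;=\; \sum_{\text{c.p.}} \epsl_i\mu_i\bigl[(\epsl_j\mu_k)^2-(\epsl_k\mu_j)^2\bigr],
\]
an alternating sum of six explicit monomials.

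Next I would expand the triple product $\beta_1\beta_2\beta_3=\prod_{\text{c.p.}}(\epsl_j\mu_k-\epsl_k\mu_j)$ into its eight summands. The key observation is that the two \emph{diagonal} contributions, obtained by selecting either always the first summand or always the second summand from the three factors, both equal $\epsl_1\epsl_2\epsl_3\,\mu_1\mu_2\mu_3$ with opposite signs, and hence cancel. Each of the remaining six triple products takes the form $\epsl_r\mu_s\cdot\epsl_s\mu_t\cdot\epsl_t\mu_r$ with $\{r,s,t\}=\{1,2,3\}$, and a short inspection shows that it is exactly a monomial of the form $\epsl_i\mu_i(\epsl_j\mu_k)^2$: for instance, $\epsl_2\mu_3\cdot\epsl_1\mu_3\cdot\epsl_2\mu_1=\epsl_1\mu_1(\epsl_2\mu_3)^2$. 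Matching the six such identifications with their signs (determined by whether the selection is cyclic or anticyclic) then proves \eqref{r.alphaImA}.

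No conceptual obstacle is expected; the only delicate step is the sign bookkeeping for the six-term matching, which the difference-of-squares reduction makes routine. I expect the whole argument to fit in a few lines.
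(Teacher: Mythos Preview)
Your proposal is correct. The difference-of-squares observation $2\alpha_i\beta_i=(\epsl_j\mu_k)^2-(\epsl_k\mu_j)^2$ reduces the left-hand side to six explicit monomials, and the expansion of $\beta_1\beta_2\beta_3$ indeed produces the same six monomials after the two diagonal terms $\pm\epsl_1\epsl_2\epsl_3\mu_1\mu_2\mu_3$ cancel; the sign check goes through exactly as you describe.

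The paper takes a different route: it first records three auxiliary identities (the relations \eqref{r.alphabeta}), in particular $\alpha_3\beta_2+\alpha_2\beta_3=-\epsl_1\mu_1\beta_1$ and $\epsl_i\mu_i\alpha_i-\alpha_j\alpha_k=\tfrac14\beta_j\beta_k$, and then manipulates the sum $\sum_i\epsl_i\mu_i\alpha_i\beta_i$ entirely in the variables $\alpha_i,\beta_i$ without ever unfolding them into the $\epsl$'s and $\mu$'s. Your approach is more self-contained and arguably shorter for this single identity; the paper's approach has the side benefit that the relations \eqref{r.alphabeta} are reused later (for example in the proof of Lemma~\ref{l.rgA}), so proving them here amortizes the cost. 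Either argument is perfectly adequate. One minor remark: the fact that $\bfbeta$ is the cross product is simply the definition \eqref{def.beta-alpha}, not something that needs to be inferred from (A0).
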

\begin{proof}
 We easily observe that the following relations and their cyclic changes hold:
\begin{equation}
\label{r.alphabeta}
\begin{aligned}
 \alpha_1^2-\gamma_1 &=& \frac14 \beta_1^2,\\
 \epsl_1\mu_1 \alpha_1 - \alpha_2\alpha_3 &=& \frac14\beta_2\beta_3,\\
 \alpha_3\beta_2 + \alpha_2 \beta_3 &=& -\epsl_1\mu_1 \beta_1.
\end{aligned}
\end{equation}
 Then, we obtain
\begin{eqnarray*}
 \bfalpha\cdot \bfg  &=&
  \sum_{i=1}^3\epsl_i\mu_i\alpha_i\beta_i = -(\alpha_3\beta_2 + \alpha_2 \beta_3)\alpha_1
 + \epsl_2\mu_2\alpha_2\beta_2 + \epsl_3\mu_3\alpha_3\beta_3 \\
 &=& (\epsl_2\mu_2\alpha_2-\alpha_1\alpha_3)\beta_2 + (\epsl_3\mu_3\alpha_3-\alpha_1\alpha_2) \beta_3
 = \frac14 \beta_1\beta_3 \beta_2 + \frac14\beta_1\beta_2\beta_3 \\
 &=&  \frac12\beta_1\beta_2\beta_3.
\end{eqnarray*}
%
\end{proof}

\begin{lemma}
\label{l.rgA} 
 Let $A$ be defined by \eqref{def.A} and $\bfg$ by \eqref{def.g}.

\noindent
(1) Let $\{a_1,a_2,a_3\}$ be the spectrum of $A$. 
 Then, we have
\begin{equation}
\label{r.vpA}
 \det A=a_1a_2a_3=0, \quad  {\rm tr} A =  a_1+a_2+a_3= \gamma_1+\gamma_2+\gamma_3>0,
 \quad\Pi_{i\neq j} a_ia_j = -\bfg^2/4 \le 0.
\end{equation}
\noindent
(2) We have $\bfalpha\in \Ima A$ iff $\beta_1\beta_2\beta_3=0$.

\noindent
(3) Assume $\bfbeta=0$.
 Then $A=\bfalpha\bfalpha^T$, $\rg(A) = 1$,
 $\Ima A=\span(\bfalpha)$ and $\ker A$
 is the complex plane orthogonal to $\bfalpha$.

\noindent
(4) Assume $\bfbeta\neq 0$. Then $\rg(A) = 2$ and $\ker A= \span(\bfg)$.
 Moreover, $A$ has one positive eigenvalue and one negative eigenvalue.

\noindent
(5) Assume $\beta_1\beta_2\beta_3=0$ (and $\bfbeta\neq0$), so we assume (A0) with
 $\beta_1>0=\beta_2>\beta_3$.
 Then there exists a unique vector $z^*\in\Ima A$ such that $Az^*=\bfalpha$.
 We have $z^* = (0,1/\sqrt{\gamma_2},0)$ and
\begin{equation}
\label{c.alphaz*}
 \bfalpha\cdot z^* = 1.
\end{equation}
\end{lemma}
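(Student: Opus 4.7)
The plan is to exploit the three identities collected in \eqref{r.alphabeta} together with the symmetry of $A$; this reduces every assertion either to a one-line application of Vieta's formulas or to the direct verification that $A\bfg = 0$.

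For (1), the trace is immediate from the diagonal. For the second elementary symmetric function of the eigenvalues I would compute the sum of the three principal $2\times 2$ minors of $A$: each factors, using the identity $\gamma_i\gamma_j = \epsl_k^2\mu_k^2\gamma_k$, as $\epsl_k^2\mu_k^2(\gamma_k - \alpha_k^2)$, and the first line of \eqref{r.alphabeta} rewrites this as $-\epsl_k^2\mu_k^2\beta_k^2/4$, so that summing yields $\sum_{i<j} a_i a_j = -\bfg^2/4$. The vanishing of $\det A$ will follow from the kernel analysis in the next step.

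For (3), $\bfbeta = 0$ forces $\bfmu = c\bfepsl$ for some $c > 0$; a direct substitution gives $\alpha_i = c\epsl_j\epsl_k$, $\gamma_i = \alpha_i^2$ and $\epsl_k\mu_k\alpha_k = \alpha_i\alpha_j$, i.e., $A = \bfalpha\bfalpha^T$, which settles (3) and yields $\det A = 0$ in this case. For (4), i.e., $\bfbeta \neq 0$ (hence $\bfg \neq 0$), I would verify $A\bfg = 0$ entry by entry: the first component evaluates to $\epsl_2\mu_2\epsl_3\mu_3\bigl[\epsl_1\mu_1\beta_1 + \alpha_3\beta_2 + \alpha_2\beta_3\bigr]$, which vanishes by the third line of \eqref{r.alphabeta}, and the remaining two entries follow by cyclic permutation. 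Thus $\bfg \in \ker A$, so $\det A = 0$; combining this with ${\rm tr}\, A > 0$ and $\sum_{i<j} a_i a_j = -\bfg^2/4 < 0$, Vieta's formulas force the spectrum of $A$ to consist of one zero eigenvalue, one strictly positive and one strictly negative eigenvalue, which yields (4) and closes (1). For (2) I would then invoke the symmetry $\Ima A = (\ker A)^\perp$: in case $\bfbeta = 0$ this is $\span(\bfalpha) \ni \bfalpha$ and $\beta_1\beta_2\beta_3 = 0$ holds trivially; in case $\bfbeta \neq 0$ it is $\bfg^\perp$, so $\bfalpha \in \Ima A$ iff $\bfalpha\cdot\bfg = 0$, which by \eqref{r.alphaImA} is equivalent to $\beta_1\beta_2\beta_3 = 0$.

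Finally, for (5), under (A0) with $\beta_2 = 0$ the relation $\epsl_1\mu_3 = \epsl_3\mu_1 =: t$ gives $\alpha_2 = t$ and $\gamma_2 = t^2$. Since $\bfg$ has zero middle component, $e_2 \in \bfg^\perp = \Ima A$, so I would look for $z^* = \lambda e_2$: the second coordinate of $Az^* = \bfalpha$ forces $\lambda = \alpha_2/\gamma_2 = 1/\sqrt{\gamma_2}$, and the first and third coordinates both reduce to $\alpha_1$ and $\alpha_3$ respectively precisely by the two substitutions $t = \epsl_1\mu_3$ and $t = \epsl_3\mu_1$. Uniqueness within $\Ima A$ is automatic because $A|_{\Ima A}$ is an automorphism, and the equality $\bfalpha\cdot z^* = \alpha_2/\sqrt{\gamma_2} = t/t = 1$ is immediate. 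The only real obstacle in the whole argument is bookkeeping---making sure each identity in \eqref{r.alphabeta} is applied in the correct slot of a matrix product or minor---after which the statements are forced by elementary linear algebra.
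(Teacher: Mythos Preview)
Your proof is correct and follows essentially the same route as the paper: verify $A\bfg=0$ via the third identity in \eqref{r.alphabeta}, compute the sum of the principal $2\times2$ minors using $\gamma_k-\alpha_k^2=-\beta_k^2/4$, read off the signature from Vieta, and handle (5) by direct substitution after noting $\alpha_2=\sqrt{\gamma_2}$. The only minor difference is in (3): you obtain $A=\bfalpha\bfalpha^T$ by writing $\bfmu=c\bfepsl$ and checking entries, whereas the paper reads it off from $p(z;\lambda)/\lambda^2=(\Psi_0(z)-\lambda^2)^2$ (since $K_0\equiv0$ when $\bfbeta=0$).
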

\begin{proof}
 (1) A simple computation shows that $A\cdot \bfg=0$ and, if $\bfbeta=0$, $A\cdot (1,1,1)=0$.
 Hence $\det A=0$. Moreover, we have
\begin{eqnarray*}
 \Pi_{i\neq j} a_ia_j &=& \gamma_2\gamma_3 - \epsl_1^2\mu_1^2\alpha_1^2 + {\rm c.p.}
  =  \epsl_1^2\mu_1^2(\epsl_2\epsl_3\mu_2\mu_3-\alpha_1^2) + {\rm c.p.} \\
 &=&   \epsl_1^2\mu_1^2(\epsl_2\epsl_3\mu_2\mu_3- (\epsl_2\mu_3+\epsl_3\mu_2)^2/4) + {\rm c.p.} \\
 &=& -\epsl_1^2\mu_1^2 \beta_1^2/4 + {\rm c.p.} = -\bfg^2/4 \le 0.
\end{eqnarray*}
\\
 (3) From \eqref{def.Psi0}, \eqref{val.pz} and \eqref{form.pz} we get $A=\bfalpha\bfalpha^T$.
 
 \noindent
 (2)
 Assume $\bfbeta = 0$. Since $A=\bfalpha\bfalpha^T$ (see (3)), then $\bfalpha\in\Ima A$.\\
 Assume $\bfbeta\neq 0$.
 Since $A$ is real symmetric then $\Ima A=\ker A^T=\bfg^\perp$, so, thanks to \eqref{r.alphaImA},
 $\bfalpha\in \Ima A\iff \bfalpha\cdot\bfg=0\iff\Pi_{j=1}^3\beta_j=0$.

\noindent
 (4) 
 (Case $\bfbeta\neq 0$.) Thanks to \eqref{r.vpA} and since $\bfg\neq 0$ then
 $\rg(A)=2$ and $\ker A=\span(\bfg)$, and (2) follows from \eqref{r.vpA}.

\noindent
(5) The proof of Lemma \ref{l.Az2BzC} (see case (2-2)) with $(\calA,\bfb,d,r,s)=(A,-2\bfalpha,3,2,1)$
 shows the existence and uniqueness of such a $z^*$.
 Put $z = (0,1/\alpha_2,0)$ and observe that $\alpha_2=\sqrt{\gamma_2}$.
 Since $\bfg=(\epsl_1\mu_1\beta_1,0,\epsl_3\mu_3\beta_3)$, then $z\perp \bfg$ and so $z\in \Ima A$.
 In addition, we have $(Az)^T= (\epsl_3\mu_3\alpha_3/\alpha_2, \alpha_2, \epsl_1\mu_1\alpha_1/\alpha_2)$.
 Thanks to \eqref{r.alphabeta}, we obtain $Az=\bfalpha$. This shows that $z^*=z$.
 Finally, \eqref{c.alphaz*} is obvious and (5) is proved.
\end{proof}
%
 Let $\lambda\in{\bf R}^*$. We use the notations of Lemma \ref{l.Az2BzC}
 and apply Lemma \ref{l.Az2BzC} to $P:=\frac1{\lambda^2}p(\cdot;\lambda)$,
 so $(\calA,\bfb,c,d)=(A,-2\lambda^2\bfalpha,\lambda^4,3)$.
 In view of Lemmas \ref{l.Az2BzC} and \ref{l.rgA} we obtain
\begin{lemma}
\label{l.3} 
(1) Assume $\bfbeta=0$.
 Then $b\in\Ima A$, $z^0= \frac{\lambda^2}{|\bfalpha|^2}\bfalpha$, $c^*=c$, and
 $M^{\bf C}_{sgn}(P)$ is a two-dimensional complex submanifold of ${\bf C}^3$.
 
\noindent
(2)  Assume $\bfbeta\neq0$. \\
\noindent
(2-1) Assume $\Pi_{j=1}^3 \beta_j\neq 0$. 
 Then $\bfb\not\in \Ima \calA$, $M^{\bf C}_{sgn}(P)=\emptyset$, $M^{\bf C}(P)=M^{\bf C}_{reg}(P)$ is
 an analytic connected manifold of dimension two and its intersection with ${\bf R}^3$ is a real
 analytic manifold of dimension two.

\noindent
(2-2) Assume $\Pi_{j=1}^3 \beta_j= 0$. Then $\bfb\in \Ima \calA$, $z^0=\lambda^2 z^*$,
 $c^*=c$, $M^{\bf C}_{sgn}(P)$ is a straight line of ${\bf C}^3$,
 and the intersection $M^{\bf C}_{sgn}(P)\cap{\bf R}^3$ is a straight line of ${\bf R}^3$.
 In addition, $M^{\bf C}_{reg}(P)$ has two connected components, $M^{\bf C}(P^+)'$ and $M^{\bf C}(P^-)'$,
 and each $M^{\bf C}(P^\pm)'\cap{\bf R}^3$ is a two-dimensional real analytic submanifold of ${\bf R}^3$.
\end{lemma}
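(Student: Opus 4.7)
The plan is to substitute the specific parameters $(\calA,\bfb,c,d)=(A,-2\lambda^2\bfalpha,\lambda^4,3)$ into Lemma \ref{l.Az2BzC} and exploit the structural information about $A$ supplied by Lemma \ref{l.rgA}. The three cases (1), (2-1), (2-2) will correspond to different branches of Lemma \ref{l.Az2BzC}. In each case the delicate points are (a) deciding whether $\bfb$ lies in $\Ima A$, (b) computing $z^0$ explicitly, and (c) verifying $c^*=c$ so that branch (2-2-2) of Lemma \ref{l.Az2BzC} applies rather than (2-2-1).

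For case (1), when $\bfbeta=0$, I would invoke Lemma \ref{l.rgA}(3): $A=\bfalpha\bfalpha^T$, so $\Ima A=\span(\bfalpha)$ and $r=1$. Solving $Az^0=\lambda^2\bfalpha$ inside $\span(\bfalpha)$ gives $z^0=(\lambda^2/|\bfalpha|^2)\bfalpha$, and a one-line computation yields $c^*=Az^0\cdot z^0=(\bfalpha\cdot z^0)^2=\lambda^4=c$. I then apply the general statement at the head of (2-2-2) in Lemma \ref{l.Az2BzC}, which asserts that $M^\CC_{sgn}(P)$ is $(d-r)$-dimensional regardless of the refined sub-case for $r$, to conclude that $M^\CC_{sgn}(P)$ is a two-dimensional complex submanifold of $\CC^3$.

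For case (2-1), when $\beta_1\beta_2\beta_3\neq0$, Lemma \ref{l.rgA}(2) gives $\bfalpha\notin\Ima A$, hence $\bfb=-2\lambda^2\bfalpha\notin\Ima A$. This is exactly case (2-1) of Lemma \ref{l.Az2BzC} (with $r<d$), whose conclusion delivers everything stated.

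For case (2-2), assuming $(A0)$ with $\beta_2=0$, Lemma \ref{l.rgA}(4) gives $\rg(A)=2$ with one positive and one negative eigenvalue, so the signature is $(s,r-s)=(1,1)$. Lemma \ref{l.rgA}(2) gives $\bfalpha\in\Ima A$, hence $\bfb\in\Ima A$; Lemma \ref{l.rgA}(5) then produces a unique $z^*\in\Ima A$ with $Az^*=\bfalpha$, so that $z^0=\lambda^2 z^*$ solves $Az^0=-\bfb/2$. Using \eqref{c.alphaz*} I get $c^*=\lambda^2\bfalpha\cdot z^0=\lambda^4(\bfalpha\cdot z^*)=\lambda^4=c$. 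Since $c=c^*$, $r=d-1=2$, $s=1$ satisfies $1\le s\le d-1$, and $d=3$, we land precisely in the two-components sub-case of (2-2-2) of Lemma \ref{l.Az2BzC}, which yields both the straight-line structure of $M^\CC_{sgn}(P)$ and $M^\CC_{sgn}(P)\cap\R^3$, and the decomposition $M^\CC_{reg}(P)=M^\CC(P^+)'\cup M^\CC(P^-)'$ into two two-dimensional real-analytic components on the real part.

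There is no truly hard step; the main obstacle is bookkeeping. Lemma \ref{l.Az2BzC} has several nested sub-cases, and one must identify the right branch in each situation. The crucial verification is that the signature information from Lemma \ref{l.rgA}(4) places case (2-2) into the nontrivial sub-case (ii) of $r=d-1$ with $d=3$ rather than into the trivial empty sub-case (i), and, jointly with \eqref{c.alphaz*}, forces the degenerate equality $c=c^*$ in cases (1) and (2-2) so that branch (2-2-2) is triggered.
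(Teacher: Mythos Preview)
Your proposal is correct and follows exactly the approach the paper takes: the paper simply states that Lemma~\ref{l.3} is obtained ``in view of Lemmas~\ref{l.Az2BzC} and~\ref{l.rgA}'' with $(\calA,\bfb,c,d)=(A,-2\lambda^2\bfalpha,\lambda^4,3)$, and you have accurately spelled out the case analysis and the verifications of $z^0$ and $c^*=c$ that this entails.
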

\begin{remark}
\label{rem.5}
 Let us supplement (5) of Lemma \ref{l.rgA} and  (2-2) of Lemma \ref{l.3}.
 We assume $\beta_1>0=\beta_2>\beta_3$.
 Then each $\tau^\pm$ is linear and we have
\[ A(z-z^0)(z-z^0)=\lambda^{-2} p(z;\lambda)=(\tau^+ -\lambda^2)(\tau^- -\lambda^2),\]
 where $z^0=\lambda^2 z^*$ is the corresponding point of Lemmas \ref{l.Az2BzC} and \ref{l.rgA}.
 Set $P=p(\cdot;\lambda)$ and the linear functions $P^\pm:=\tau^\pm-\lambda^2$.
 Then each $M^{\bf C}_{sgn}(P^\pm)$ is void and, since $P=\lambda^2 P^+ P^-$, we have
\begin{equation}
\label{val.MCsgnP}
 M^{\bf C}_{sgn}(P)=M^{\bf C}(P^+)\cap M^{\bf C}(P^-) =\{z\in {\bf C}^3 \; \mid \; K_0(z)=\Psi_0(z)-\lambda^2=0\}
\end{equation}
 is a complex analytic variety of dimension one.
 Then, 
\[
 M^{\bf C}_{reg}(P)= M^{\bf C}(P^+)'\cup M^{\bf C}(P^-)' ,
\]
 where $M^{\bf C}(P^\pm)':=M^{\bf C}(P^\pm)\sauf M^{\bf C}_{sgn}(P)$, as in the proof of Lemma \ref{l.Az2BzC}.
 Thanks to Lemma \ref{l.Az2BzC}, $M^{\bf C}(P^+)'$ and $M^{\bf C}(P^-)'$ are the two connected component
 of $M^{\bf C}_{reg}(P)$; they are two-dimensional submanifolds of ${\bf C}^3$.
\end{remark}
 Lemma \ref{l.3} shows that we can't apply Theorem \ref{th.31} directly to prove Rellich's properties
 if $\bfbeta=0$ since Assumptions (A-1-1)($p(;\lambda)$) fails.
\begin{lemma}
\label{l.44}
 Assume $\bfbeta=0$. Set $P=\Psi_0-\lambda^2$.
 Then $M^{\bf C}_{reg}(P)=M^{\bf C}(P)$ and $M^{\bf C}(P)\cap [0,1]^3$ is a closed convex set.
 In addition, we have\\
 \noindent
 (1) Let $|\lambda|\in (0,\lambda_+)$. Then $M^{\bf C}_{reg}(P)\cap (0,1)^3$ has dimension two.\\
 \noindent
 (2) Let $|\lambda|=\lambda_+$. Then $M^{\bf C}_{reg}(P)\cap [0,1]^3=\{(1,1,1)\}$.
\end{lemma}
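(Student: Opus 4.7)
The key simplification is that when $\bfbeta=0$, we have $K_0\equiv 0$, hence $\tau^+=\tau^-=\Psi_0$, and so $P(z)=\Psi_0(z)-\lambda^2=\alpha_1z_1+\alpha_2z_2+\alpha_3z_3-\lambda^2$ is an \emph{affine} function of $z$. My plan is to exploit this affinity throughout.

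First I would note that $\alpha_i=(\epsl_j\mu_k+\epsl_k\mu_j)/2>0$ for every $i$, since all $\epsl_j,\mu_j>0$. Consequently $\nabla P=\bfalpha\neq 0$ on all of $\CC^3$, which immediately yields $M^\CC_{sgn}(P)=\emptyset$, whence $M^\CC_{reg}(P)=M^\CC(P)$. This also shows that $M^\CC(P)\cap\R^3=\{z\in\R^3\mid \bfalpha\cdot z=\lambda^2\}$ is an affine hyperplane, so its intersection with the convex cube $[0,1]^3$ is the intersection of two closed convex sets, hence a closed convex set.

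Next I would identify $\lambda_+$. Since $\Psi_0(z)=\bfalpha\cdot z$ with $\alpha_i>0$, the restriction of $\Psi_0$ to $[0,1]^3$ is strictly increasing in each variable and attains its unique maximum at $z=(1,1,1)$, with value $\alpha_1+\alpha_2+\alpha_3$. Therefore
\[
 \lambda_+^2 \;=\; \max_{[0,1]^3}\tau^+(z)\;=\;\max_{[0,1]^3}\Psi_0(z)\;=\;\alpha_1+\alpha_2+\alpha_3.
\]

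For part (1), take $\lambda$ with $0<|\lambda|<\lambda_+$, so that $0<\lambda^2<\alpha_1+\alpha_2+\alpha_3$. The point $z_0:=(t,t,t)$ with $t:=\lambda^2/(\alpha_1+\alpha_2+\alpha_3)\in(0,1)$ lies in $(0,1)^3$ and satisfies $\bfalpha\cdot z_0=\lambda^2$, so it belongs to $M^\CC(P)\cap(0,1)^3$. Since $\nabla P=\bfalpha$ has constant nonzero value, the implicit function theorem (or simply solving for one coordinate, e.g.\ $z_3=(\lambda^2-\alpha_1z_1-\alpha_2z_2)/\alpha_3$) shows that $M^\CC(P)\cap(0,1)^3$ is an open subset of an affine plane containing $z_0$, hence a two-dimensional real analytic submanifold of $\R^3$.

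For part (2), with $\lambda^2=\lambda_+^2=\alpha_1+\alpha_2+\alpha_3$, any $z\in[0,1]^3$ with $\bfalpha\cdot z=\lambda^2$ satisfies $\sum_i\alpha_i(1-z_i)=0$ with each term $\alpha_i(1-z_i)\ge 0$; by strict positivity of the $\alpha_i$ this forces $z_i=1$ for every $i$, so $M^\CC(P)\cap[0,1]^3=\{(1,1,1)\}$. There is no real obstacle here: the whole lemma reduces to elementary linear geometry of a hyperplane meeting a cube, once one observes the affinity of $P$ and the positivity of $\bfalpha$; the only point requiring a small verification is exhibiting an interior intersection point in (1), which the diagonal choice $z_0=t(1,1,1)$ handles.
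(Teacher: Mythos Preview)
Your proof is correct and follows essentially the same route as the paper: both exploit that $P$ is affine with gradient $\bfalpha$ having strictly positive entries, identify $\lambda_+^2=\alpha_1+\alpha_2+\alpha_3$, exhibit the diagonal point $t(1,1,1)$ in $(0,1)^3$ for part~(1), and use the strict monotonicity/positivity of the $\alpha_i$ for part~(2). Your scaling $t=\lambda^2/(\alpha_1+\alpha_2+\alpha_3)$ is in fact the correct one (the paper's $|\lambda|/\lambda_+$ appears to be a typo for $\lambda^2/\lambda_+^2$).
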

\begin{proof}
 Obviously, $M^{\bf C}_{sgn}(P)=\emptyset$ and $M^{\bf C}(P)\cap [0,1]^3$ is
 a closed convex set as intersection of  the plane $\bfalpha\cdot z=\lambda^2$
 of ${\bf R}^3$ and the cube $[0,1]^3$.\\
 \noindent
 (1)
 Let $|\lambda|\in (0,\lambda_+)$. 
 Set $z_\lambda=\frac{|\lambda|}{\lambda_+}(1,1,1)\in (0,1)^3$, so 
 $\Psi_0(z_\lambda)=\tau^\pm(z_\lambda)=\lambda^2$, since we have
 $\lambda_+=\Psi_0(1,1,1)=\sum_{j=1}^3\alpha_j$.
 Thus, $z_\lambda \in M^{\bf C}(P)\cap (0,1)^3$, and the conclusion follows.\\
\noindent
(2)  Assume $|\lambda|=\lambda_+$. Since $\partial_{z_j}\Psi_0(z)=\alpha_j>0$ for all $j$
 then $|\Psi_0(z)|<\lambda_+ = \Psi_0(1,1,1)$ for all $z\in [0,1]^3\sauf\{(1,1,1)\}$.
 Hence $\{(1,1,1)\}=M^{\bf C}(P)\cap [0,1]^3$, which proves (2).
\end{proof}
\begin{lemma}
\label{l.45}
 Under the assumptions in Remark \ref{rem.5} with the same notations (hence we assume
 $\bfbeta\neq 0$ and $\Pi_{j=1}^3\beta_j=0$ and we have $P=p(\cdot;\lambda)$, 
 $P^\pm:=\tau^\pm-\lambda^2$), we have\\
 (1) The intersection $M^{\bf C}_{sgn}(P)\cap[0,1]^3$ has one point at most.\\
 (2) Let $|\lambda| \in (0,\lambda_-)$. Each $M^{\bf C}(P^\pm)'\cap (0,1)^3$ has dimension two.\\
 (3-1) Let $|\lambda| \in (0,\lambda_+)$. Then $M^{\bf C}(P^+)'\cap (0,1)^3$ has dimension two.\\
 (3-2) Let $|\lambda| \in (0,\lambda_-)$. Then $M^{\bf C}(P^-)'\cap (0,1)^3$ has dimension two.\\
 (3-3) Let $|\lambda| \ge \lambda_-$. Then  $M^{\bf C}(P^-)'\cap [0,1]^3$ has at most one point.
\end{lemma}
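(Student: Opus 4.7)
The analysis rests on two elementary facts about the linear forms $\tau^\pm$ from Proposition \ref{prop.spectreH} under assumption (A0) with $\beta_2=0$. First, every coefficient of $\tau^\pm$ is a product of two of the positive numbers $\epsl_j,\mu_k$, so both $\tau^\pm$ are linear forms with strictly positive coefficients. Consequently each attains its maximum on $[0,1]^3$ uniquely at $(1,1,1)$, giving $\lambda_\pm^2=\tau^\pm(1,1,1)$. Second, a direct computation using $\bfbeta=\bfepsl\times\bfmu$ yields
\[
 \tau^+(z)-\tau^-(z)=\beta_1 z_1-\beta_3 z_3,
\]
where under (A0) we have $\beta_1>0$ and $-\beta_3>0$. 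Hence $\tau^+\ge \tau^-$ on $[0,1]^3$, with equality exactly on the segment $\{(0,z_2,0) : z_2\in[0,1]\}$, and strict inequality throughout $(0,1)^3$.

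For (1), by \eqref{val.MCsgnP} the intersection $M^\CC_{sgn}(P)\cap[0,1]^3$ consists of the $z\in[0,1]^3$ satisfying $\tau^+(z)=\tau^-(z)=\lambda^2$. The first equality forces $z_1=z_3=0$; substituting into $\tau^\pm$ reduces the system to the single scalar equation $\alpha_2 z_2=\lambda^2$ (using $\alpha_2=\epsl_3\mu_1$ since $\beta_2=0$), which has at most one solution in $[0,1]$.

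For (2), (3-1), (3-2), the strict inequality $\tau^+>\tau^-$ on $(0,1)^3$ forces $M^\CC_{sgn}(P)\cap(0,1)^3=\emptyset$, so $M^\CC(P^\pm)'\cap(0,1)^3=M^\CC(P^\pm)\cap(0,1)^3$ is the intersection of the open cube with the real affine plane $\{\tau^\pm=\lambda^2\}$. Since $\nabla\tau^\pm$ is a nonzero constant vector, it suffices to exhibit one interior solution; by linearity, for any $|\lambda|\in(0,\lambda_\pm)$ the point $z_\lambda:=(\lambda^2/\lambda_\pm^2)(1,1,1)$ lies in $(0,1)^3$ and satisfies $\tau^\pm(z_\lambda)=\lambda^2$. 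A neighborhood of $z_\lambda$ within the plane remains in $(0,1)^3$, yielding the claimed two-dimensional real analytic submanifold.

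For (3-3), if $|\lambda|>\lambda_-$ then $\tau^-(z)\le\lambda_-^2<\lambda^2$ throughout $[0,1]^3$, so $M^\CC(P^-)\cap[0,1]^3=\emptyset$. If $|\lambda|=\lambda_-$, the strict positivity of the coefficients of $\tau^-$ makes $(1,1,1)$ the unique maximizer of $\tau^-$ on $[0,1]^3$, hence the only solution of $\tau^-(z)=\lambda_-^2$ there. In either case $M^\CC(P^-)'\cap[0,1]^3$ contains at most one point. The only (minor) obstacle is bookkeeping: one must correctly read off the signs imposed by (A0), the explicit linear forms $\tau^\pm$ from Proposition \ref{prop.spectreH}, and keep track of strict versus non-strict inequalities; no deeper ingredient is needed.
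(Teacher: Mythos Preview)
Your argument is correct and follows essentially the same route as the paper: part (1) uses \eqref{val.MCsgnP} together with the observation that $K_0(z)=0$ on $[0,1]^3$ forces $z_1=z_3=0$ (which you obtain via the explicit formula $\tau^+-\tau^-=\beta_1 z_1-\beta_3 z_3$); parts (2), (3-1), (3-2) use the same interior test point on the diagonal as in Lemma~\ref{l.44}; and (3-3) is identical. Your write-up is in fact slightly cleaner than the paper's in two spots: you explicitly note that $M^\CC_{sgn}(P)\cap(0,1)^3=\emptyset$ so that $M^\CC(P^\pm)'$ and $M^\CC(P^\pm)$ agree on the open cube, and your scaling $z_\lambda=(\lambda^2/\lambda_\pm^2)(1,1,1)$ is the correct one (the paper's $z_\lambda=\frac{|\lambda|}{\lambda_+}(1,1,1)$ in the proof of Lemma~\ref{l.44} appears to be a typo).
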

\begin{proof}
1)
 Thanks to \eqref{val.MCsgnP} and since $\beta_1>0>\beta_2$,  we have the equivalence
 ($K_0(z)=0$ and $z\in [0,1]^3$) iff $z_1=z_3=0$. Hence
\[
 M^{\bf C}_{sgn}(P) \cap [0,1]^3  =\{z=(0,z_2,0)\; \mid \; z_2\in [0,1] \quad {\rm and}
  \quad z_2=\lambda^2/\alpha_2\} \subset \{z^0\},
\]
 with $z^0=(0,\lambda^2/\alpha_2,0)$. This proves (1).\\
 (2), (3-1) and (3-2).  It is similar to the proof of Lemma \ref{l.44}.\\
 (3-3) If $|\lambda|>\lambda_- = \sup_{[0,1]^3} \tau^-$ then, obviously, 
 $M^{\bf C}(P^-) \cap \T^3$ is void.
 If $|\lambda|=\lambda_- = \sqrt{\tau^-(z)}$ and $z\in [0,1]^3$, then $z=(1,1,1)$,
 so $M^{\bf C}(P^-) \cap [0,1]^3 = \{(1,1,1)\}$. The conclusion follows.
\end{proof}
\begin{lemma}
\label{l.46}
 Assume $\Pi_{j=1}^3\beta_j\neq0$.
 Let $\lambda\neq 0$ and set $P=\lambda^{-2} p(\cdot;\lambda)$, so $M^{\bf C}_{sgn}(P)=\emptyset$.\\
 (1) Let $|\lambda| \in (0,\lambda_+)$. Then $M^{\bf C}(P)\cap (0,1)^3$
 has dimension two.\\
 (2) Let $|\lambda|=\lambda_+$. Then $M^{\bf C}(P)\cap [0,1]^3=\{(1,1,1)\}$.
\end{lemma}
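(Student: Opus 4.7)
The plan is to apply Lemma \ref{l.Az2BzC}, case (2-1), to the polynomial $P = \lambda^{-2}p(\cdot;\lambda) = Az\cdot z + \bfb \cdot z + c$. By Lemma \ref{l.rgA}(2), the hypothesis $\beta_1\beta_2\beta_3 \neq 0$ gives $\bfalpha \notin \Ima A$, so $\bfb = -2\lambda^2 \bfalpha \notin \Ima A$; hence Lemma \ref{l.Az2BzC}(2-1) immediately yields $M^\CC_{sgn}(P) = \emptyset$ and, more importantly, that $M^\CC(P) \cap \R^3$ is a two-dimensional real analytic submanifold of $\R^3$.

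For (1), I exhibit a diagonal point via homogeneity. Since $\Psi_0$ is homogeneous of degree one and $K_0$ is homogeneous of degree two in $z$, the functions $\tau^\pm = \Psi_0 \pm \sqrt{K_0}$ are homogeneous of degree one; hence $z_\lambda := (\lambda/\lambda_+)^2 (1,1,1) \in (0,1)^3$ satisfies $\tau^+(z_\lambda) = (\lambda/\lambda_+)^2\, \tau^+(1,1,1) = \lambda^2$, which gives $P(z_\lambda) = (\tau^+(z_\lambda)-\lambda^2)(\tau^-(z_\lambda)-\lambda^2) = 0$. Thus $z_\lambda \in M^\CC(P) \cap (0,1)^3$, and since $(0,1)^3$ is open and $M^\CC(P) \cap \R^3$ is a two-dimensional submanifold near $z_\lambda$, the intersection has dimension two.

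For (2), first $(1,1,1) \in M^\CC(P) \cap [0,1]^3$ since $\tau^+(1,1,1) = \lambda_+^2 = \lambda^2$. For the reverse inclusion, the factorization $P = (\tau^+ - \lambda^2)(\tau^- - \lambda^2)$ combined with the strict inequality $\lambda_- < \lambda_+$ (valid because $\bfbeta \neq 0$, by the remark after Proposition \ref{p.HD-spectre}) shows $\tau^-(z) \leq \lambda_-^2 < \lambda_+^2$ on $[0,1]^3$, forcing any $z \in M^\CC(P) \cap [0,1]^3$ to satisfy $\tau^+(z) = \lambda_+^2 = \max_{[0,1]^3}\tau^+$. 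It therefore remains to show that this maximum is attained only at $(1,1,1)$.

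The main obstacle is this uniqueness, which I prove via strict componentwise monotonicity of $\tau^+$ on $[0,1]^3$. Suppose toward a contradiction that $z^* \in [0,1]^3$ is a maximizer with $z^*_i < 1$ for some $i$; I seek $\epsilon > 0$ with $\tau^+(z^* + \epsilon e_i) > \tau^+(z^*)$. If $K_0(z^*) = 0$ then $\tau^+(z^*) = \Psi_0(z^*)$, so $\tau^+(z^* + \epsilon e_i) \geq \Psi_0(z^* + \epsilon e_i) = \Psi_0(z^*) + \epsilon\alpha_i > \tau^+(z^*)$ for every $\epsilon > 0$. If $K_0(z^*) > 0$ then $\tau^+$ is smooth at $z^*$ with $\partial_{z_i}\tau^+(z^*) = \alpha_i + (\partial_{z_i}K_0)/(2\sqrt{K_0})$, and the key identity
\[
 4\alpha_i^2 K_0 - (\partial_{z_i}K_0)^2 = 4\gamma_i K_0 - \beta_i^2 \beta_j\beta_k z_jz_k,
\]
obtained by expanding $(\partial_{z_i}K_0)^2 = (\beta_i/2)^2(\beta_i z_i - \beta_j z_j - \beta_k z_k)^2$ and using the relation $4\alpha_i^2 - \beta_i^2 = 4\gamma_i$ from \eqref{r.alphabeta}, yields $\partial_{z_i}\tau^+(z^*) > 0$ under (A0). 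Indeed, for $i \in \{1,2\}$ the product $\beta_i^2\beta_j\beta_k z_j z_k \leq 0$ (since $\beta_3$ appears as a factor), so the right-hand side is positive; for $i = 3$ the direct computation $\partial_{z_3}K_0 = (\beta_3/2)(\beta_3 z_3 - \beta_1 z_1 - \beta_2 z_2) \geq 0$ (since $\beta_3 < 0$ and the bracket is nonpositive under (A0)) already gives $\partial_{z_3}\tau^+(z^*) \geq \alpha_3 > 0$. In every case this contradicts $z^*$ being a maximizer, establishing uniqueness and completing the proof.
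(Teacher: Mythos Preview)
Your proof is correct and follows essentially the same route as the paper: factorize $P=(\tau^+-\lambda^2)(\tau^--\lambda^2)$, locate a zero on the diagonal $\{t(1,1,1)\}$ for part (1), and for part (2) use $\lambda_-<\lambda_+$ to discard the $\tau^-$ factor and reduce to uniqueness of the maximizer of $\tau^+$ on $[0,1]^3$. The only substantive differences are that you find the diagonal point explicitly via homogeneity of $\tau^\pm$ (the paper uses the intermediate value theorem instead), and that you actually \emph{prove} the uniqueness claim ``$\tau^+(z)=\lambda_+^2$ on $[0,1]^3$ iff $z=(1,1,1)$'' via strict coordinatewise monotonicity, whereas the paper simply asserts it. Your monotonicity argument is clean: the identity $4\alpha_i^2K_0-(\partial_{z_i}K_0)^2=4\gamma_iK_0-\beta_i^2\beta_j\beta_k z_jz_k$ together with the sign pattern of (A0) handles $i\in\{1,2\}$, and the direct sign computation of $\partial_{z_3}K_0$ handles $i=3$. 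So your argument is a more self-contained version of the paper's.
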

\begin{proof}
 We have $P(z)=(\tau^+(z)-\lambda^2)(\tau^-(z) - \lambda^2)$ so
 $M^{\bf C}(P)\cap (0,1)^3=\{z\in (0,1)^3$ $\mid$ $\tau^+(z)=\lambda^2\}\cup \{z\in (0,1)^3$ $\mid$ $\tau^-(z)=\lambda^2\}$.
 Since $\nabla P(z)\neq 0$ for all $z\in [0,1]^3$ then $M^{\bf C}(P)\cap (0,1)^3$
 has dimension is 2 iff it is non empty.\\
(1) Let $|\lambda| \in (0,\lambda_+)$ and set $z(t)=t(1,1,1)$.
 Since $\tau^+(z(0))=0$ and $\tau^+(z(1))=\lambda_+^2$ then there exists $t\in (0,1)$
 such that $\tau^+(z(t))=\lambda^2$. Hence $z(t)\in M^{\bf C}(P)\cap (0,1)^3$.
(2)  Let $|\lambda|\ge \lambda_+$. Since $\lambda_+<\lambda_-$ then $\tau-(z)<\lambda^2$
 for all $z\in [0,1]^3$.
 Assume $|\lambda|> \lambda_+$. Then $\tau^+(z)<\lambda^2$ for all $z\in [0,1]^3$
 so $M^{\bf C}(P)=\emptyset$. 
 Assume $|\lambda|= \lambda_+$. Since we have
 ($\tau^+(z)= \lambda_+^2$ and $z\in [0,1]^3$) iff $z=(1,1,1)$, the conclusion then follows.
\end{proof}

\subsection{The complex Fermi variety in the $x$-variable} 
 We denote $\sin_\T:\: \T_{\bf C}^d\ni x \mapsto z=\sin^2 x \in {\bf C}^d$ and set
\[ X_{0,1} := (\sin_\T^2)^{-1} (\{0,1\}^3) =\{x\in \T^3 \; \mid \: z\in \{0,1\}^3\}.\]
\begin{lemma}
\label{l.topo1}
 Let $d\ge r\ge 1$ and  $E\subset{\bf C}^d$ a connected set.
 Assume that $\tilde E:=(\sin_\T^2)^{-1}(E)$ is a $r$-dimensional smooth submanifold of $\T_{\bf C}^d$.
 Let $C_x$ a connected component of $\tilde E$. Then $C_x$ is open and closed and
 $\sin^2(C_x)=E$.
\end{lemma}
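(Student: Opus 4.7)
The plan is to prove $\sin^2(C_x)=E$ by establishing that $\sin^2(C_x)$ is a nonempty subset of $E$ which is both open and closed in $E$; connectedness of $E$ then forces equality. That $C_x$ itself is open and closed in $\tilde E$ follows immediately: closedness holds for any connected component of a topological space, and openness is a consequence of $\tilde E$ being locally connected as a smooth manifold.

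For openness of $\sin^2(C_x)$ in $E$, I would use that $\sin_\T^2:\T_\CC^d\to\CC^d$ is a holomorphic map between complex manifolds of the same complex dimension $d$, and is therefore an open map by the open mapping theorem. Given $z_0=\sin_\T^2(x_0)\in \sin^2(C_x)$ with $x_0\in C_x$, the openness of $C_x$ in $\tilde E$ supplies an open set $V\subset \T_\CC^d$ with $x_0\in V$ and $V\cap\tilde E\subset C_x$. The tautological identity
\[ \sin_\T^2(V\cap\tilde E) = \sin_\T^2(V)\cap E, \]
which follows directly from $\tilde E=(\sin_\T^2)^{-1}(E)$, combined with the openness of $\sin_\T^2(V)$ in $\CC^d$, yields an open neighborhood of $z_0$ in $E$ contained in $\sin^2(C_x)$.

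For closedness of $\sin^2(C_x)$ in $E$, I would invoke the properness of $\sin_\T^2:\T_\CC^d\to \CC^d$: since in each coordinate $x_j\in \T_\CC\approx\T\times\R$ one has $|\sin^2 x_j|\to+\infty$ as $|\Im x_j|\to+\infty$, the preimage of any compact subset of $\CC^d$ lies in some horizontal strip $\{|\Im x_j|\le M\}$ which is compact in $\T_\CC^d$. For $z_n\in \sin^2(C_x)$ with $z_n\to z\in E$, lift to $x_n\in C_x$ with $\sin_\T^2(x_n)=z_n$; properness produces a convergent subsequence $x_{n_k}\to x^*\in\T_\CC^d$ with $\sin_\T^2(x^*)=z$, whence $x^*\in\tilde E$. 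Closedness of $C_x$ in $\tilde E$ then forces $x^*\in C_x$, and therefore $z=\sin_\T^2(x^*)\in \sin^2(C_x)$.

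Combining openness, closedness, and non-emptiness of $\sin^2(C_x)$ in the connected set $E$ yields $\sin^2(C_x)=E$. The key technical inputs are the open mapping theorem (for openness) and the properness of $\sin_\T^2$ (for closedness); I anticipate the main obstacle to be purely notational, namely careful bookkeeping between the subspace topology of $\tilde E\subset\T_\CC^d$ and of $E\subset\CC^d$ via the identity $\sin_\T^2(V\cap\tilde E)=\sin_\T^2(V)\cap E$.
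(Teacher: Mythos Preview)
Your proposal is correct and follows essentially the same route as the paper: both arguments show that $C_x$ is open and closed in $\tilde E$ via local connectedness of the manifold $\tilde E$, then prove that $\sin_\T^2(C_x)$ is open in $E$ using the open mapping theorem together with the identity $\sin_\T^2(V\cap\tilde E)=\sin_\T^2(V)\cap E$, and closed in $E$ using properness of $\sin_\T^2$ via a sequential argument, concluding by connectedness of $E$. The only cosmetic difference is that the paper phrases the openness and closedness as properties of the restricted map $\phi_E:\tilde E\to E$ itself, whereas you argue directly about the image $\sin^2(C_x)$.
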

\begin{proof}
 Let us consider the topological set $E$ with the topology induced by those of ${\bf C}^d$
 and the topological set $\tilde E$ with the topology induced by those of $\T_{\bf C}^d$.
 Then, it is easy to see that $\phi_E$: $\tilde E \ni x\mapsto z \in E$ is continuous. 
 Since $\tilde E$ is a $r$-dimensional smooth submanifold of $\T_{\bf C}^d$, near any $x^0\in \tilde E$,
 there exists an open ball $B(x^0,r)\subset \T_{\bf C}^d$ and a smooth function $f:B(x^0,r)\mapsto {\bf C}$
 such that $\nabla f(x)\neq 0$ for all $x\in B(x^0,r)$.  Then, $\tilde E\cap B(x^0,r)= f^{-1}(0_{{\bf C}})$.
 Hence $\tilde E$ is locally connected and each of its connected components is open and closed.
 We now prove that the map $\phi_E$ is both open and closed.
 It then implies that the set $\phi_E(C_x)$ is open and closed hence equals to $E$, since $E$ is connected.
 Since $\sin^2$ is a non-constant holomorphic function from ${\bf C}^d$ into itself then it is 
 an open map and $\sin_\T^2: \: \T_{\bf C}^d={\bf C}^d/(2\pi {\bf Z})^d\mapsto {\bf C}^d$ is also open.
 Let $V$ an open set of $\tilde E$ so $V=\tilde E\cap V'$ where $V'$ is an open set of $\T_{\bf C}^d$.
 Then $\phi_E(V)=\phi_E((\sin_\T^2)^{-1}(E)\cap V')=E\cap \sin_\T^2(V')$.
 Since $\sin_\T^2$ is an open map then $\sin_\T^2(V')$ is open.
 Thus $\phi_E(V)$ is an open set of $E$. Hence $\phi_E$ is an open map.
 Let us prove that $\phi_E$ is a closed map. 
 We observe that $|\sin^2(\Re x+i\Im x)|=\sinh^2(\Im x)+\sin^2(\Re x)$ so a set of the form $\sin_\T^2(A)$,
 $A\subset\T_{\bf C}^d$, is unbounded iff $A$ is unbounded. (In fact $\sin_\T^2$ is proper.)
 Let $F$ a closed subset of $\tilde E$. Let $z_n$ a sequence of values in $\phi_E(F)$ which tends to
 some $z\in E$. Then $z_n=\phi_E(x_n)$ tends to $z$, $x_n\in F$, so, by the above observation,
 the sequence $(x_n)\subset F^\N$ is bounded.
 Let $x'$ a subsequential limit of $x_n$. Then $z=\phi_E(x')$ so $z\in \phi_E(F)$.
 Hence $\phi_E(F)$ is closed.\\
 \indent The conclusion then follows. In fact, since $\phi_E$ is an open and closed map
 and $C_x$ is open and closed then $\phi_E(C_x)$ is both open and closed in the connected
 space $E$, so it coincides with $E$.
\end{proof}
\begin{lemma}
\label{l.54}
 Assume $\bfbeta=0$. Set $Q(x)=\Psi_0(z)-\lambda^2$ with $z=\sin^2 x$.
 Then $M^{\bf C}_{sgn}(Q)$ is discrete.
 In addition, assume $|\lambda|\in (0,\lambda_+)$.
 Then, each connected component of $M^{\bf C}_{reg}(Q)$ intersects $\T^3$ and
 the intersection is a two-dimensional real manifold.
\end{lemma}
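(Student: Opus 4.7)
The plan is to relate $Q$ to the affine polynomial $P(z):=\Psi_0(z)-\lambda^2$ on $\CC^3$ via the factorisation $Q=P\circ\sin_\T^2$, and then to apply Lemmas \ref{l.44} and \ref{l.topo1}.

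First I would compute the critical points of $Q$. Since $\alpha_j>0$ and $\partial_{x_j}Q(x)=\alpha_j\sin(2x_j)$, the equation $\nabla Q(x)=0$ is equivalent to $\sin(2x_j)=0$ for every $j=1,2,3$. In $\T_\CC$, $\sin(2x_j)=0$ is equivalent to $\sin^2 x_j\in\{0,1\}$, which in turn forces $x_j\in\{0,\pi/2,\pi,3\pi/2\}$. Consequently $\{x\in\T_\CC^3\mid\nabla Q(x)=0\}$ is the finite set $X_{0,1}$, and $M^\CC_{sgn}(Q)=X_{0,1}\cap M^\CC(Q)$ is discrete. This also yields the key identity
\[
  M^\CC_{reg}(Q) \;=\; M^\CC(Q)\setminus X_{0,1} \;=\; (\sin_\T^2)^{-1}(E),
  \qquad E := M^\CC(P)\setminus\{0,1\}^3.
\]

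Next I would verify the hypotheses of Lemma \ref{l.topo1} for this $E$. The set $M^\CC(P)=\{z\in\CC^3\mid\bfalpha\cdot z=\lambda^2\}$ is a complex affine hyperplane, i.e.\ a connected real four-dimensional affine subspace of $\CC^3$; removing the at most $2^3$ real points $\{0,1\}^3\cap M^\CC(P)$ does not disconnect it, so $E$ is connected. On $M^\CC_{reg}(Q)$ one has $\nabla Q\neq 0$ by definition of the regular part, and therefore $(\sin_\T^2)^{-1}(E)=M^\CC_{reg}(Q)$ is a smooth complex submanifold of $\T_\CC^3$ of complex dimension two. Lemma \ref{l.topo1} (with $r=2$) then yields $\sin_\T^2(C)=E$ for every connected component $C$ of $M^\CC_{reg}(Q)$.

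Finally, for $|\lambda|\in(0,\lambda_+)$, Lemma \ref{l.44}(1) provides a point $z^\star\in M^\CC(P)\cap(0,1)^3\subset E$. Any preimage of such a $z^\star$ under $\sin_\T^2$ lies in $\T^3$: the identity $\sin^2(a+ib)=\sin^2 a\,\cosh^2 b-\cos^2 a\,\sinh^2 b+\tfrac{i}{2}\sin(2a)\sinh(2b)$ shows that a real value in $(0,1)$ for a coordinate $\sin^2 x_j$ forces $\Im x_j=0$. Hence each connected component $C$ meets $\T^3$. At any $x\in C\cap\T^3$ one has $\sin^2 x\notin\{0,1\}^3$, so $\sin(2x_j)\neq 0$ for at least one index $j$, i.e.\ the \emph{real} gradient of $Q|_{\T^3}$ is non-zero; the real-analytic implicit function theorem then gives that $C\cap\T^3$ is a two-dimensional real analytic submanifold of $\T^3$. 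The main obstacle is to identify the correct auxiliary set $E\subset\CC^3$ for which $(\sin_\T^2)^{-1}(E)$ coincides with the smooth manifold $M^\CC_{reg}(Q)$ and $E$ is connected; once this is done, Lemma \ref{l.topo1} transports the connectedness analysis from the base $\CC^3$ to the cover $\T_\CC^3$, and the rest of the argument is local.
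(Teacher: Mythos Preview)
Your proof is correct and follows essentially the same route as the paper's: compute $\nabla Q$ via the chain rule to identify $M^\CC_{sgn}(Q)\subset X_{0,1}$, set $E=M^\CC(P)\setminus\{0,1\}^3$, invoke Lemma~\ref{l.topo1} to get $\sin_\T^2(C)=E$ for each component $C$, and then use Lemma~\ref{l.44}(1) to produce a point of $E\cap(0,1)^3$. Your write-up is in fact slightly more explicit than the paper's in two places---you verify directly (via the formula for $\sin^2(a+ib)$) that every $\sin_\T^2$-preimage of a point in $(0,1)^3$ lies in $\T^3$, and you justify the two-dimensionality of $C\cap\T^3$ at \emph{every} point of the intersection by the real implicit function theorem rather than only on $(\sin_\T^2)^{-1}((0,1)^3)$---but the underlying argument is the same.
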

\begin{proof}
 Since $\partial_{z_j}\Psi_0(z)\neq 0$ for all $z\in {\bf C}^3$, and since $\partial_{x_j}z=\sin(2x_j)e_j$
 vanishes iff $z_j\in \{0,1\}$, then $\nabla Q(x)=0$ iff $z\in \{0,1\}^3$.
 Hence $M^{\bf C}_{sgn}(Q)=  X_{0,1}$ is discrete.
 Let $C_x$ be a connected component of $M^{\bf C}_{reg}(Q)$. Set $P=\Psi_0-\lambda^2$.
 We have $M^{\bf C}_{reg}(Q)=(\sin_\T^2)^{-1}(E)$ with $E:= M^{\bf C}_{reg}(P)\sauf \{0,1\}^3$.
 Since $M^{\bf C}_{reg}(P)$ is a connected two-dimensional complex manifold, then so is $E$.
 Since $E$ is a two-dimensional complex manifold, so is $C_x$.
 Thanks to Lemma \ref{l.topo1}, we obtain $\sin^2(C_x)=E$. 
 Since $E\subset M^{\bf C}_{reg}(P)$ and $M^{\bf C}_{reg}(P)$ intersects $(0,1)^3$,
  $\sin^2(C_x)$ intersects $(0,1)^3$. Hence $C_x$ intersects $\T^3$.
 Finally, since the jacobian of $\sin^2|_{\T^3}$ does not vanish on $(\sin_\T^2)^{-1}((0,1)^3)$
 ($\subset \T^3$), then $C_x\cap \T^3$ is a two-dimensional real manifold.
\end{proof}
\begin{lemma}
\label{l.55}
 Assume $\bfbeta\neq 0$ and $\Pi_{j=1}^3\beta_j=0$.
 Set $Q(x)=q(x;\lambda)$.\\
 (1) The analytic variety $M^{\bf C}_{sgn}(Q)$ has Hausdorff ($5$)-measure zero and $M^{\bf C}_{sgn}(Q)\cap \T^3$ is finite.\\
 (2) We have $M^{\bf C}_{reg}(Q)=M_{\bf C}^+ \cup M_{\bf C}^-$ where each $M_{\bf C}^\pm$, defined
 by
\[ M_{\bf C}^\pm :=    \{x\in \T^3_{\bf C} \; \mid \; z\not\in \{0,1\}^3, \; \tau^\pm(z)=\lambda^2 \neq \tau^\mp(z)\}\]
 is a two-dimensional submanifold of $\T_{\bf C}^3$.\\
 (3) If $|\lambda|\in (0,\lambda_-)$, each connected component of $M_{\bf C}^\pm$
 intersects $\T^3$ and the intersection is a two-dimensional real manifold.\\
 (4) If $|\lambda|\in [\lambda_-,\lambda_+)$, each connected component of $M_{\bf C}^+$
 intersects $\T^3$  and the intersection is a two-dimensional real manifold.
 However, the set $M_{\bf C}^- \cap \T^3$ is finite.
\end{lemma}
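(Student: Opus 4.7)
The plan is to exploit the factorization
\[
 Q(x) = p(z;\lambda) = \lambda^2(\tau^+(z)-\lambda^2)(\tau^-(z)-\lambda^2)
\]
from \eqref{val.pz} together with Remark \ref{rem.5}, pulling the structure of $M^\CC(P^\pm)$ in the $z$-variable back to the $x$-variable via $z = \sin^2 x$. Under (A0) with $\beta_2 = 0$, Proposition \ref{prop.spectreH} gives that both $\tau^\pm$ are linear in $z$ with strictly positive coefficients, so the chain rule yields
\[
 \partial_{x_j} Q(x) = \lambda^2\sin(2x_j)\bigl[\partial_{z_j}\tau^+\,(\tau^-(z)-\lambda^2) + \partial_{z_j}\tau^-\,(\tau^+(z)-\lambda^2)\bigr].
\]

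For (1), I would characterize $M^\CC_{sgn}(Q)$ by when $Q(x)=0$ and $\nabla Q(x)=0$ hold simultaneously. Either both $\tau^\pm(z) = \lambda^2$, placing $z$ in $M^\CC_{sgn}(P)$ (the complex line described in Remark \ref{rem.5}), or exactly one equality holds; in the latter case each bracket above has a nonvanishing second factor (since $\partial_{z_j}\tau^\pm > 0$), forcing $\sin(2x_j)=0$ for every $j$, hence $x \in X_{0,1}$. Thus $M^\CC_{sgn}(Q) \subset (\sin_\T^2)^{-1}(M^\CC_{sgn}(P)) \cup X_{0,1}$. The first set is the preimage of a one-dimensional complex variety under the open holomorphic map $\sin_\T^2$, hence a complex analytic set of complex dimension one, real dimension two, and therefore of Hausdorff $5$-measure zero in $\T_\CC^3$; the second is finite. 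On the real torus, Lemma \ref{l.45}(1) confines $M^\CC_{sgn}(P)\cap[0,1]^3$ to at most one point whose preimage under $\sin^2|_{\T^3}$ is finite, yielding the finiteness of $M^\CC_{sgn}(Q) \cap \T^3$.

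For (2), the equation $Q(x)=0$ is equivalent to $\tau^+(z) = \lambda^2$ or $\tau^-(z) = \lambda^2$, so combining with the singular-set description above gives $M^\CC_{reg}(Q) = M_\CC^+ \cup M_\CC^-$ by construction. That each $M_\CC^\pm$ is a two-dimensional complex submanifold of $\T_\CC^3$ follows from the complex implicit function theorem: the gradient $\nabla_x(\tau^\pm(\sin^2 x) - \lambda^2) = \sum_j \partial_{z_j}\tau^\pm \sin(2x_j) e_j$ never vanishes on $M_\CC^\pm$, because the condition $z \notin \{0,1\}^3$ guarantees at least one $\sin(2x_j) \neq 0$.

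For (3) and (4), I would write $M_\CC^\pm = (\sin_\T^2)^{-1}(E^\pm)$ with $E^\pm := M^\CC(P^\pm)' \setminus \{0,1\}^3$. By Remark \ref{rem.5}, each $M^\CC(P^\pm)'$ is a connected two-dimensional complex submanifold of $\CC^3$; removing the finite set $\{0,1\}^3$, which has real codimension four, preserves connectedness, so $E^\pm$ is connected. Lemma \ref{l.topo1} then yields $\sin^2(C_x) = E^\pm$ for every connected component $C_x$ of $M_\CC^\pm$. In case (3), and for $M_\CC^+$ in case (4), Lemma \ref{l.45}(2),(3-1) furnishes a real point $z^0 \in E^\pm \cap (0,1)^3$; any $x^0 \in C_x$ with $\sin^2 x^0 = z^0$ must have real coordinates, since the identity $\sin^2(a+ib) = (\sin a \cosh b + i\cos a \sinh b)^2 \in (0,1)$ forces $b=0$. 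Hence $x^0 \in C_x \cap \T^3$, and the nonvanishing real gradient from (2) makes $C_x \cap \T^3$ a two-dimensional real submanifold near $x^0$. For $M_\CC^-$ in (4), Lemma \ref{l.45}(3-3) confines $M^\CC(P^-)' \cap [0,1]^3$ to a subset of $\{(1,1,1)\} \subset \{0,1\}^3$, so $E^- \cap [0,1]^3 = \emptyset$ and $M_\CC^- \cap \T^3 = \emptyset$, a fortiori finite. The main technical delicacy is the bookkeeping among the three exceptional loci $M^\CC_{sgn}(P)$, $X_{0,1}$, and $\{0,1\}^3$, together with the elementary $\sin^2$ calculation that forces real preimages to remain real.
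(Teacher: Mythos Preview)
Your proposal is correct and follows essentially the same approach as the paper: decompose $M^\CC_{sgn}(Q)$ into the finite set $X_{0,1}\cap M^\CC(Q)$ and the pullback of the complex line $M^\CC_{sgn}(P)$, write $M_\CC^\pm=(\sin_\T^2)^{-1}(E^\pm)$ with $E^\pm$ connected, and then invoke Lemma~\ref{l.topo1} together with Lemma~\ref{l.45} to locate real points. Your version is somewhat more explicit than the paper's---you spell out the chain-rule case analysis, the implicit function theorem for the submanifold claim, and the elementary computation showing $(\sin_\T^2)^{-1}((0,1)^3)\subset\T^3$---whereas the paper simply refers back to the parallel argument in Lemma~\ref{l.54}; but the skeleton and the key lemmas are identical.
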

\begin{proof}
 We use the notations in Lemma \ref{l.45}, so $Q(x)=P(z)$, and we set 
 $Q^\pm(x)=\tau^\pm(z) - \lambda^2=P^\pm(z)$.\\
(1) We have
\begin{eqnarray}
\nonumber
 M^{\bf C}_{sgn}(Q) &=& (M^{\bf C}(Q)\cap X_{0,1}) \cup (M^{\bf C}(Q^+)\cap M^{\bf C}(Q^-)) \\
\label{val3.MCsgn}  &=& (M^{\bf C}(Q)\cap X_{0,1}) \cup (\sin_\T^2)^{-1}(M^{\bf C}_{sgn}(P)).
\end{eqnarray}
 The set $M^{\bf C}(Q)\cap X_{0,1}$ is finite since $X_{0,1}$ is finite.
 In addition, the set $(\sin_\T^2)^{-1}(M^{\bf C}_{sgn}(P))$ has Hausdorff $k$-measure zero for all $k\ge 3$,
 since $M^{\bf C}_{sgn}(P)$ is a complex straight line (see Lemmas \ref{l.3}, \ref{l.45}) and $\sin_\T^2$
 is a local smooth diffeomorphism except on a finite set of ${\bf C}^3$.
 Hence $M^{\bf C}_{sgn}(Q)$ has Hausdorff $(2d-1=5)$-measure zero.
 Since $M^{\bf C}_{sgn}(P)\cap [0,1]^3$ has at most one point (see Lemma \ref{l.45}),
  \eqref{val3.MCsgn} shows that $M^{\bf C}_{sgn}(Q)\cap \T^3$ is finite. This proves (1).\\
(2) The relation $M^{\bf C}_{reg}(Q)=M_{\bf C}^+ \cup M_{\bf C}^-$ is then obvious.
 In addition, we have $M_{\bf C}^\pm=(\sin_\T^2)^{-1}(M^{\bf C}(P^\pm)) \sauf M^{\bf C}_{sgn}(Q)$.
 Since $M^{\bf C}(P^\pm)$ is a two-dimensional submanifold of ${\bf C}^3$ and $M^{\bf C}_{sgn}(Q)$
 has dimension one,  $M_{\bf C}^\pm$ is a two-dimensional submanifold of $\T_{\bf C}^3$.\\
 (3) and (4) for $M_{\bf C}^+$.  It is similar to the corresponding assertion in  Lemma \ref{l.54}.\\
 (4) for $M_{\bf C}^-$. This is a direct consequence of (3-3), Lemma \ref{l.45}.
\end{proof}
\begin{lemma}
\label{l.56}
 Assume $\Pi_{j=1}^3\beta_j\neq0$.
 Let $|\lambda| \in (0,\lambda_+)$ and set $Q(x)=\lambda^{-2}p(z;\lambda)$.\\
(1) The set $M^{\bf C}_{sgn}(Q)$ is finite. \\
 (2) Assume $|\lambda| \in (0,\lambda_+)$.
 Then, each connected component of $M^{\bf C}_{reg}(Q)$ intersects $\T^3$,
 and the intersection is a two-dimensional real manifold.
\end{lemma}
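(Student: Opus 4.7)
The plan is to follow the template of Lemmas \ref{l.54} and \ref{l.55}: write $Q(x)=P(z)$ for $P=\lambda^{-2}p(\cdot;\lambda)$ with $z=\sin^2 x$, use Lemma \ref{l.46} to note that $M^\CC_{sgn}(P)=\emptyset$ and that $M^\CC(P)$ is a connected two-dimensional complex submanifold of $\CC^3$ meeting $(0,1)^3$ in a two-dimensional real set, and then combine a chain-rule argument for (1) with Lemma \ref{l.topo1} for (2).

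For part (1), the chain rule gives $\partial_{x_j}Q(x)=\partial_{z_j}P(z)\,\sin(2x_j)$, so $x\in M^\CC_{sgn}(Q)$ forces, for every $j$, either $z_j\in\{0,1\}$ or $\partial_{z_j}P(z)=0$. Since $M^\CC_{sgn}(P)$ is empty, the set $S=\{j:\partial_{z_j}P(z)=0\}$ is a proper subset of $\{1,2,3\}$; with $z_j\in\{0,1\}$ for $j\notin S$ fixed, the remaining coordinates $\{z_j\}_{j\in S}$ are determined by a linear system whose matrix is the principal submatrix of $A$ indexed by $S$. I would verify, using $\gamma_\ell>0$ and the identity $\gamma_j\gamma_k-(\epsl_i\mu_i\alpha_i)^2=-\epsl_i^2\mu_i^2\beta_i^2/4$ following from \eqref{r.alphabeta}, that each such submatrix is invertible whenever $\beta_1\beta_2\beta_3\ne 0$. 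This yields finitely many candidate $z$ with $P(z)=0$, each having finitely many preimages under $\sin_\T^2$.

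For part (2), set $\Sigma:=\sin^2(M^\CC_{sgn}(Q))$ (finite by (1)) and $E:=M^\CC(P)\setminus\Sigma$. Since $M^\CC(P)$ is a connected two-dimensional complex manifold and $\Sigma$ is finite, $E$ is connected. Its preimage $\tilde E:=(\sin_\T^2)^{-1}(E)$ lies inside $M^\CC_{reg}(Q)$ (any $x\in\tilde E$ with $\nabla Q(x)=0$ would give $\sin^2 x\in\Sigma$) and differs from $M^\CC_{reg}(Q)$ by the finite set $M^\CC_{reg}(Q)\cap(\sin_\T^2)^{-1}(\Sigma)$; hence $\tilde E$ is an open two-dimensional complex submanifold of $\T_\CC^3$, and Lemma \ref{l.topo1} gives $\sin^2(C')=E$ for every connected component $C'$ of $\tilde E$. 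If $C$ is a connected component of $M^\CC_{reg}(Q)$, then $C\cap\tilde E$ is open and closed in $\tilde E$, and non-empty (otherwise $C$ would sit inside the finite set $M^\CC_{reg}(Q)\setminus\tilde E$, contradicting $\dim_\CC C=2$), so $C$ contains some component $C'$ of $\tilde E$.

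To produce real points in $C$, I would pick $z^\ast\in E\cap(0,1)^3$, available because $M^\CC(P)\cap(0,1)^3$ is two-dimensional by Lemma \ref{l.46}(1) and $\Sigma$ is finite. Lemma \ref{l.topo1} provides some $x'\in C'$ with $\sin^2 x'=z^\ast$; a short computation with $\sin(a+ib)=\sin a\cosh b+i\cos a\sinh b$ shows that $\sin x_j=c$ has only real solutions in $\T_\CC$ for $|c|<1$, so $(\sin_\T^2)^{-1}((0,1)^3)\subseteq\T^3$ and $x'\in C\cap\T^3$. Finally, $Q$ is real on $\T^3$ and $\nabla_\CC Q$ is non-vanishing on $M^\CC_{reg}(Q)$, so the real gradient of $Q$ is non-zero at every point of $C\cap\T^3$, forcing $\{Q=0\}\cap\T^3$ to be a two-dimensional real analytic submanifold lying in $C$ in a neighborhood of each such point. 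The main obstacle I anticipate is the careful case analysis for (1), in particular the invertibility of the $|S|\times|S|$ principal submatrices of $A$, which degenerates precisely at the excluded boundary $\beta_1\beta_2\beta_3=0$.
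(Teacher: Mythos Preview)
Your approach is essentially the paper's: the same chain-rule decomposition into cases indexed by which coordinates satisfy $z_j\in\{0,1\}$, the same determinant identities $\gamma_\ell>0$ and $\gamma_j\gamma_k-(\epsl_i\mu_i\alpha_i)^2=-\tfrac14\epsl_i^2\mu_i^2\beta_i^2$ for the invertibility check in part (1), and the same use of Lemma \ref{l.topo1} together with $M^\CC(P)\cap(0,1)^3\neq\emptyset$ for part (2). One simplification you can make: since $M^\CC_{sgn}(Q)=(\sin_\T^2)^{-1}(\Sigma)$ exactly, your $\tilde E$ coincides with $M^\CC_{reg}(Q)$, so the intermediate $C$-versus-$C'$ argument is unnecessary and Lemma \ref{l.topo1} applies directly to the components of $M^\CC_{reg}(Q)$.
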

\begin{proof}
 Put $P(z):=Q(x)$ so $\partial_{x_j}Q(x)=0$ iff $\partial_j P(z)=0$ or $z_j\in \{0,1\}$.
 Hence $M^{\bf C}_{sgn}(Q) = (\sin_\T^2)^{-1}(F)$ with
\begin{eqnarray*}
 F &:=& M^{\bf C}_{sgn}(P)  \cup_{j=1}^3 F_j \cup_{j\neq k} F_{j,k} \cup (\{0,1\}^3\cap M^{\bf C}(P)),\\
 F_j &:=& \{z\in M^{\bf C}(P)\; \mid \; z_j\in\{0,1\}^3, \; \partial_{z_k}P(z)=0 \quad k\neq j\},\\
 F_{j,k} &:=& \{z\in M^{\bf C}(P)\; \mid \; z_j,z_k\in\{0,1\}^3, \; \partial_{z_l}P(z)=0 \quad l\neq j,k\}.
\end{eqnarray*}
(1) Thanks to Lemma \ref{l.46} we have $M^{\bf C}_{sgn}(P)=\emptyset$.
 We denote by $A_j$ the $j^{\mathrm{th}}$ column of the matrix $A$ defined 
 by  \eqref{def.A}, by $E_j$ the column of the coefficients of $e_j$ in the canonical basis $(e_1,e_2,e_3)$,
 by $\tilde A_j$ the $3\times 3$ matrix obtained from $A$ by replacing the column $A_j$ by $E_j$
 and by $\tilde A_{j,k}$ (with $j\neq k$) the $3\times 3$ matrix obtained from $A$ by replacing the columns
 $A_j$ and $A_k$ respectively by the column $E_j$ and $E_k$.
 Since $P(z) = Az\cdot z + \bfb \cdot z + c$, then $\partial_j P(z)= 2A_j\cdot z + \bfb_j$.\\
 Let us consider $F_1$. 
 We have $F_1=F_1(0)\cup F_1(1)$, where
 $F_1(\xi)$ is the intersection of the three hyperplanes $z_1=\xi$, $2A_k\cdot z + \bfb_k=0$, $k=2,3$.
 The matrix of the above system is $B_1:=(E_1|A_2|A_3)$ where $E_1:=(1\: 0\: 0)^T$.
 Its determinant is
\[
 \det(B_1)=\gamma_2\gamma_3-(\epsl_1\mu_1\alpha_1)^2 =
  (\gamma_1-\alpha_1^2)(\epsl_1\mu_1)^2
 =  -\frac14 \beta_1^2  (\epsl_1\mu_1)^2 <0.
\]
 Hence $F_1(\xi)$ is reduced to one point, and then $F_1$ is a couple of points.
 Similarly, each $F_j$ is reduced to two points. 
 Let us consider $F_{1,2}$. We have $F_{1,2}=\cup_{\xi,\xi'\in \{0,1\}} F_{1,2}(\xi,\xi')$
 where $F_{1,2}(\xi,\xi')$ is the intersection of the three hyperplanes $z_1=\xi$, $z_2=\xi'$, $2A_3\cdot z + \bfb_3=0$.
 The matrix of the above system is $B_{1,2}:=(E_1|E_2|A_3)$ where $E_2:=(0\: 1\: 0)^T$.
 Its determinant is $\det(B_{1,2})=\gamma_3\neq 0$.
 Hence $F_{1,2}$ is finite.
 Then, $F$ is finite. Now we observe that, for any finite set $E\subset{\bf C}^3$, $(\sin_\T)^{-1}(E)$
 is a finite subset of $\T_{\bf C}^3$. Consequently, $M^{\bf C}_{sgn}(Q)$ is finite.\\
(2) We have $M^{\bf C}_{reg}(Q) = (\sin_\T^2)^{-1}(E)$ with  $E := M^{\bf C}_{reg}(P) \sauf F$.
 Thanks to Lemma \ref{l.46}, $M^{\bf C}_{reg}(P)$ is a  connected two-dimensional submanifold of
 ${\bf C}^3$.
 So, since $\dim F \le 1$, then $E$ is also a connected two-dimensional submanifold of ${\bf C}^3$.
 Thus $M^{\bf C}_{reg}(Q)$ is a two-dimensional submanifold of $\T_{\bf C}^3$.
 Let $C_x$ be a connected component of $M^{\bf C}_{reg}(Q)$. Lemma \ref{l.topo1} says that
 $\sin_\T^2(C_x)=E$.
 We have $E\cap (0,1)^3= M^{\bf C}_{reg}(P)\cap(0,1)^3$, since $F \cap(0,1)^3$ is empty.
 Thanks to Lemma \ref{l.46}, $M^{\bf C}_{reg}(P)\cap(0,1)^3$ has dimension two.
 Thus $E$ intersects $(0,1)^3$. 
 The end of the proof is similar to the end of the proof of Lemma \ref{l.54}.
\end{proof}

\section{Proof of the main Theorems}
\label{sec.Proof-main}
\subsection{Proof of Theorem \ref{th.2}}
 Theorem \ref{th.2} is a straight consequence of Theorem \ref{th.31}, Lemmas \ref{l.54}, \ref{l.55}, \ref{l.56}.
 
\subsection{Proof of Theorem \ref{th.3}}
 We need
\begin{lemma}
\label{S5Lemma5.1}
 Assume $\beta \neq 0$ and $\beta_1\beta_2\beta_3 =0$. Let $\lambda \neq 0$. \\
\noindent
 (1) The set $M^{\bf C}_{reg}(q(\cdot;\lambda))$ is a disjoint union of the following
 two complex manifolds of dimension 2:
\[
 M_{\bf C}^{\pm} := \{x \in {\mathbb T}^3_{\bf C} \; \mid \; z \not\in \{0,1\}^3, 
 \tau^{\pm}(z) = \lambda^2 \neq \tau^{\mp}(z)\}.
\]
 (2) Assume $|\lambda| > \lambda_-$. Then, $M_{\bf C}^-\cap \mathbb T^3$ is finite,
\end{lemma}

 The assertions (1) and (2) of Lemma \ref{S5Lemma5.1} are a straight consequence of Lemma \ref{l.55}.\\
 
 Set
\[ B(x)=- \bfmu \tilde M(y) \bfepsl \tilde M(y), \]
 and $u=(u_E,u_H)$ with
\[
 u_E(x) := \frac1{\lambda} \bfepsl \tilde M(y) u_H(x)  , \quad
 u_H(x) := (\tau^-(z)-\lambda^2)^{-1} \Com(B(x)-\lambda^2) v_H, 
\]
 where $v_H$ is a non-null constant column-vector of length 3.
 Since $u_H\in \calC^\infty(\T^3;{\bf C}^3)$ so does $u_E$. Thus $\hat u\in L^2({\bf Z}^3)\subset \calB_0^*({\bf Z}^3)$.
 Then, $(B(x)-\lambda^2)u_H(x)= \lambda^2 (\tau^+(z)-\lambda^2)v_H$ is a trigonometric polynomial,
 and so is $(H^D-\lambda)u=(0,\lambda^{-1}(B-\lambda^2)u_H)$.\\
 Let us prove that we can choose $v_H$ such that $u$ is not a trigonometric polynomial.
 We remember that the family of eigenvalues of the symmetric matrix $-\bfepsl \tilde M(y) \bfmu \tilde M(y)$
 is $(0,\tau^+(z),\tau^-(z))$ and we denote by $\Pi^0(y)$, $ \Pi^+(y)$, $\Pi^-(y)$, respectively,
 the associated spectral eigenprojectors.
 Thus, we have
\begin{eqnarray*}
 \Com(B(x)-t) &=& -t(\tau^+(z)-t) \Pi^-(y) -t(\tau^-(z)-t) \Pi^+(y)\\
 && +(\tau^+(z)-t)(\tau^-(z)-t)\Pi^0(y),
\end{eqnarray*}
 therefore, 
\begin{equation}
\label{rel.comB}
 \Com(B(x)-\tau^-(z)) = -2\tau^-(z) \sqrt{K_0(z)} \Pi^-(y).
\end{equation}
 Hence, $\Com(B(x)-\tau^-(z))$ has rank one at any $z\in {\bf C}^3$ such that $K_0(z)\neq 0$.
 If each coefficient $c_{j,k}(y)$ of $\Com(B(x)-\lambda^2)$ were reducible by $(\tau^-(z)-\lambda^2)$,
 i.e, if  $(\tau^-(z)-\lambda^2)^{-1}c_{j,k}(y)$ were a trigonometrical polynomial, then
 $\Com(B(x)-\lambda^2)$ would vanish at any $z\in {\bf C}^3$ such that $\tau^-(z)-\lambda^2=0$.
 This is in contradiction with \eqref{rel.comB}.
 Hence there exists $v_H$ such that $u$ is not a trigonometrical polynomial.
 This ends the proof of Theorem \ref{th.3}.

 We complete this section by proving the assertion of Remark \ref{rem1.th3}.
 Set $Q^\pm(x)=(\tau^\pm(z)-\lambda^2)v(x)$, $v:=Q^-u$. Since $\tau^-$ is linear, $Q^-$ is then smooth
 in the $x$ variable, so $v\in \calB_0^*(\T^3)$.
 In addition we have $Q^+ = \lambda^{-2} q(\cdot;\lambda)u$ which is a trigonometric polynomial.
 Since $Q^+$ satisfies Conditions (A-1)-(A-2) of Theorem \ref{th.31}, the result follows.

\subsection{Proof of Theorem \ref{th.1.3}}
 I. We put
$$
 \calP_{disc}^-(\xi,l):=  \{n\in{\bf Z}^3\; \mid \; \xi \cdot n \le l\} ,
 \quad (\xi,l)\in {\bf R}^3\times \R,
$$ 
 and
\[
 G^* :=\{\xi \in \Sm^2\; \mid \; |\xi_i|> |\xi_j|\ge |\xi_l| \mbox{ for some permutation $(i,j,l)$
 of $(1,2,3)$}\},
\]
 where $\Sm^2$ denotes the euclidian unit sphere of ${\bf R}^3$.
 By definition we have $\Kint=\Om\cap{\bf Z}^3$ where $\Om$ is a bounded convex set of ${\bf R}^3$.
\begin{lemma}
\label{l.charact-Kint}
 There exists a finite family $F\subset G^* \times \R$ such that
\begin{equation}
\label{val.Kint}
 \Kint= \cap_{(\xi,l)\in F} \calP_{disc}^-(\xi,l) .
\end{equation}
\end{lemma}
 Proof. Notice that the characterization \eqref{val.Kint} of $\Kint$ is well-known
 with $G^*$ replaced by $\Sm^2$.
 The bounded convex set $\Om\subset {\bf R}^3$ can be assumed closed and written 
 $\Om = \cap_{(\xi,r)\in F_0} \calP^-(\xi,r)$, where $\calP^-(\xi,r):=\{\nu\in{\bf R}^3$ $\mid$ $\nu \cdot \xi \le r\}$
 and $F_0\subset \Sm^2 \times \R$.
 Since $\Kint$ is bounded then $\Kint\subset K_1$, $K_1:= [-l_0,l_0]^3\cap {\bf Z}^3$ for some $l_0>0$.
 Since $\Kint$ is finite we then have
\[ \Kint= \cap_{(\xi,r)\in F_1} \calP^-(\xi,r)  \cap K_1,\]
 where $F_1\subset F_0$ is a finite family.
 Obviously, we can assume that $F_1$ is non void and that $K_1 \not\subset \calP^-(\nu,r)$, $(\nu,r)\in F_1$.
 (If $K_1\subset \calP^-(\nu,r)$ we suppress $(\nu,r)$ in $F_1$, that is, we replace $F_1$ by $F_1\sauf\{(\nu,r)\}$.)
 Fix $(\nu,r)\in F_1$. 
 Since $K_1\sauf  \calP^-(\nu,r)$ is finite and non void, we put
 $\delta:= \sup\{ \nu \cdot n -r$ $\mid$ $n\in K_1\sauf  \calP^-(\nu,r)\}>0$.
 We then have
\[
  \calP^-(\nu,r)  \cap K_1 =\calP^-(\nu,r+\delta/2)  \cap K_1 , 
\]
\[ 
 K_1\sauf  \calP^-(\nu,r) = K_1\sauf  \calP^-(\nu,r-\delta/2).
\]
 Since $G^*$ is dense in $\Sm^2$ and since $K_1$ is finite, there then exists $\xi\in G^*$
 sufficiently closed to $\nu$ such that
\[
\begin{aligned}
\calP^-(\xi,r)  \cap K_1 \subset \calP^-(\nu,r+\delta/2)  \cap K_1,\\
  K_1\sauf  \calP^-(\xi,r) \subset K_1\sauf  \calP^-(\nu,r-\delta/2).
\end{aligned}
\]
 Then we obtain
\[ \calP^-(\nu,r) \cap K_1 = \calP^-(\xi,r)\cap K_1 =  \calP_{disc}^-(\xi,r)\cap K_1.\]
 Hence, \eqref{val.Kint} holds with $F=F_1 \cup_{j=1}^3 (e_j,l_0) \cup_{j=1}^3 (-e_j,l_0)$.
 \qed

\medskip
 II. Letting $\hat v:\:{\bf Z}^3 \ni n \mapsto \hat v(n) \in {\bf C}^d$ be a sequence with compact support,
 we set
\[ \hat v_S(n)=\ove{\hat v(n)},\]
 so $v_S(x) = \ove{v(-x)}$. In addition, letting $\xi\in{\bf R}^3$, $\xi\neq 0$, we define
\[ N_\xi^{max}(v) = N_\xi^{max}(\hat v) := \max\{n\cdot\xi \; \mid \; \hat v(n)\neq 0\} ,\]
 with the convention $\max(\emptyset)=-\infty$.
 
 We put $|\xi|_\infty:=\max(|\xi_j|;\; j\in\{1,2,3\})$, where the $\xi_l$'s are the usual coordinates
 of $\xi$.  
\begin{lemma}
\label{lem.4}
 Let $\lambda\neq 0$ and $\xi\in G^*$. Let $v$ and $w$ are two trigonometric polynomials.

 1) We have
$$ N_\xi^{max}(v+w) = N_\xi^{max}(v) \quad {\rm if} \quad N_\xi^{max}(v)> N_\xi^{max}(w),$$
$$ N_\xi^{max}(vw) \le N_\xi^{max}(v) + N_\xi^{max}(w) $$
 holds true.

2) We have
\[
 \begin{aligned}
  N_\xi^{max}(e^{ix_j})&=N_\xi^{max}(\delta_{e_j}) = |\xi_j|, \\
  N_\xi^{max}(z_j) &= 2|\xi_j| ,\\
  N_\xi^{max}(z_j^2) &= 4|\xi_j|.
\end{aligned}
\]
3) For $t\neq 0$ and all sequence $\hat v$ we have
\begin{equation}
\label{val.Nqv}
 N_\xi^{max}(q_t v) = 4|\xi|_\infty + N_\xi^{max}(v).
\end{equation}
4) We have
\begin{eqnarray*}
 N_\xi^{max}(v_S) &=& N_\xi^{max}(v),\\
 N_\xi^{max}(<v,\ove{v_S(x)}>_{{\bf C}^m}) &=& 2N_\xi^{max}(v) .
\end{eqnarray*}
\end{lemma} 
 Proof.
1) Obvious.\\
2) Since $z_j= (-4)^{-1}(e^{2ix_j}+e^{-2ix_j}-2)$, the computation is straightforward.\\
3) Thanks to \eqref{form.pz} and \eqref{def.A} we have 
\[ t^{-2} q_t(x) = \sum_{j=1}^3 \gamma_j z_j^2 + \sum_{j\neq l} A_{j,l}z_j z_l + \bfb\cdot z+ t^2 ,\]
 with $t\gamma_j\neq 0$.
 Since $|\xi|_\infty = \xi_{i^*}>\max(|\xi_j|,|\xi_k|)$ for some $i^*$ where $\{i^*,j,k\}=\{1,2,3\}$,
 then,
\[ N_\xi^{max}(q_t) = N_\xi^{max}(z_i^2)= 4\xi_{i^*} = 4|\xi|_\infty.\]
 In addition, we then have
$$ N_\xi^{max}(q_t v) \le  N_\xi^{max}(q_t) + N_\xi^{max}(v) = 4|\xi|_\infty + N_\xi^{max}(v). $$
%
%
 We have
\begin{eqnarray*}
 N_\xi^{max}(e^{\pm  2ix_j} v) &=& \max\{n\cdot\xi \; \mid \; U(e^{\pm 2ix_j} v)(n)\neq 0\}
  =  \max\{n\cdot\xi \; \mid \; v(n\pm 2e_j)\neq 0\} \\
 &=&  \max\{(n\mp 2e_j)\cdot\xi \; \mid \; v(n)\neq 0\} = N_\xi^{max}(v) \mp 2 \xi_j.
\end{eqnarray*}
 Hence, $N_\xi^{max}(z_jv) =  N_\xi^{max}(v) + 2|\xi_j|$,
 $N_\xi^{max}(z_jz_k v) =  N_\xi^{max}(v) + 2|\xi_j| + 2|\xi_k|$.
 Thus, since $\xi\in G^*$,
$$ N_\xi^{max}(z_jz_k v) < N_\xi^{max}(v) + 4|\xi| \quad {\rm if}\quad (j,k)\neq (i^*,i^*).$$
 Hence,
$$ N_\xi^{max}(q_tv) =  N_\xi^{max}(z_{i^*}^2 v) = N_\xi^{max}(v) + 4|\xi| .$$
4) The function $<v(x),\ove{v_S(x)}>_{{\bf C}^d}=(2\pi)^{-3}\sum_{n,m}e^{-i(n+m)x}\hat v(n)\ove{\hat v(m)}$
 is a trigonometric polynomial, and, by a simple computation, the conclusion follows.
\qed 

 
 \medskip
III. We put $L_t(x)=L(x;t):=\Com(H^D(x)-t)$, and $q_t(x):=\det(H^D(x)-t)$.
 The symmetric real matrix $L(x;t)$ has size $5\times 5$ and $L$ is a homogeneous polynomial
 of degree 5 in the variables $t,y_1,y_2,y_3$.
 Since $H^D(x)$ has rank $\le 4$ then, $L_0=0$ and, consequently,
 $L_t$ is a polynomial of the variables $y_1,y_2,y_3$ with degree $\le 4$.
 Hence $L_t$ has the form
\begin{equation}
\label{rel.B1vS}
 L_t(x) = tB_1(x) + t^2 B_2(x;t),
\end{equation}
 where $B_1(x)=-\Phi_0(z)\Pi_0(y)$, $\Pi_0(y)$ is the orthogonal projection on $\ker H^D(x)$,
 $\Phi_0:=\Psi_0^2-K_0$ is a homogeneous polynomial of degree 2 of the variables $z_1,z_2,z_3$,
 and $B_2(\cdot;t)$ is a trigonometric polynomial of degree 3.
 Thus, if $t\neq0$, then $L_t$ is a trigonometric polynomial of degree $4$.
 Since $B_1(x)=\lim_{t\to 0} t^{-1}L_t(x)$ we then see that $B_1$ is also a trigonometric polynomial
 of degree 4.\\
 We have
\begin{lemma}
\label{l.Nmax(q)}
 Let $t\neq 0$ and $\xi\in G^*$. We then have
\begin{equation}
\label{est.NLt}
 N_\xi^{max}(L_t) = N_\xi^{max}(B_1) = N_\xi^{max}(q_t) = 4|\xi|_\infty >  3|\xi|_\infty
  \ge  N_\xi^{max}(B_2(t)).
\end{equation}
\end{lemma}
\begin{proof}
 Without loss of generality we assume $\xi_1>|\xi_j|$, $j\in\{2,3\}$.
 Since $q_t I = (H^D-t)L_t$, we then have, thanks to Lemma \ref{lem.4},
\begin{equation}
\label{rel.qtHDLt}
 4|\xi|_\infty = 4\xi_1 = N_\xi^{max}(q_t) \le N_\xi^{max}(H^D) +N_\xi^{max}(L_t).
\end{equation}
 Since $H^D-t$ (respect., $L_t$) is a polynomial of degree $1$ (respect;, of degree 4) of
 the variables $y_j=(2i)^{-1}(e^{ix_j}-e^{-ix_j})$, then $N_\xi^{max}(H^D-t)\le \xi_1$ and
 $N_\xi^{max}(L_t)\le 4\xi_1$. Hence the inequalities in \eqref{rel.qtHDLt} are equalities.
 Finally, since $B_2(t)$ is a polynomial of degree $\le 3$ of the variables $y_j=(2i)^{-1}(e^{ix_j}-e^{-ix_j})$,
 then $N_\xi^{max}(B_2(t))\le 3\xi_1$.
\end{proof}

\noindent IV. We have the following
\begin{lemma}
\label{lem.5}
 Let $\lambda\neq 0$ and $\xi\in G^*$. Let $l\in\R$ and $\hat u$ be compactly supported in ${\bf Z}^3$
 such that $(\hat H^{D_p}-\lambda)\hat u(n)=0$ in $\{n\in{\bf Z}^3$ $\mid$ $\xi\cdot n>l\}$.
 Then, $\hat u(n)=0$ in $\{n\in{\bf Z}^3$ $\mid$ $\xi\cdot n>l\}$.
\end{lemma}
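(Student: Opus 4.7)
My plan is to pass to the Fourier side on $\T^3$ and combine the cofactor identity $L_\lambda(H^D-\lambda)=q_\lambda I$ with the sharp $N_\xi^{\max}$ information provided by Lemma~\ref{l.Nmax(q)}. Let $u$ be the trigonometric polynomial corresponding to $\hat u$, and write the hypothesis as $(H^{D_p}-\lambda)u = g$, where $g$ is a trigonometric polynomial satisfying $N_\xi^{\max}(\hat g)\le 0$. Since the perturbation $\hat W:=\hat D_p-\hat D$ is compactly supported on $\Z^3$, I decompose $H^{D_p}=H^D+\tilde W H_0$ and set $h:=g-\tilde W H_0 u = (H^D-\lambda)u$, another trigonometric polynomial whose Fourier support lies in $\{\xi\cdot n\le 0\}\cup\supp\hat W$.

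The core of the proof is then the scalar identity obtained by applying the cofactor: $q_\lambda\, u = L_\lambda\, h$ pointwise on $\T^3$. Arguing by contradiction, assume $M:=N_\xi^{\max}(\hat u)>0$. By Lemma~\ref{l.Nmax(q)}, $q_\lambda$ is scalar-valued with $N_\xi^{\max}(q_\lambda)=4|\xi|_\infty$ and nonzero leading Fourier coefficient $\lambda^2\gamma_1/16$ (WLOG $\xi_1=|\xi|_\infty>0$), so equality yields $N_\xi^{\max}(q_\lambda u)=4|\xi|_\infty+M$; on the right-hand side, one only has the inequality $N_\xi^{\max}(L_\lambda h)\le 4|\xi|_\infty+N_\xi^{\max}(\hat h)$. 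Combining the two gives $M\le N_\xi^{\max}(\hat h)$, and the desired conclusion $M\le 0$ amounts to controlling the contribution of $\tilde W H_0 u$ to the right-hand side.

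The main obstacle is exactly here: the crude bound $N_\xi^{\max}(\hat h)\le\max(0,\,C_W)$ with $C_W=\max_{\supp\hat W}\xi\cdot n$ is not sharp enough when $C_W>0$. The refinement uses the decomposition $L_\lambda=\lambda B_1+\lambda^2 B_2$ with $B_1=-\Phi_0\Pi_0$ from Section~\ref{sec.Proof-main}, which says that the leading Fourier coefficient of $L_\lambda$ is proportional to the projection $\Pi_0^{(\xi)}:=\mathrm{diag}(e_1e_1^T,e_1e_1^T)$ onto the limiting kernel of $H^D$ along the direction $\xi$. Taking the Paley--Wiener asymptotic of $q_\lambda u = L_\lambda h$ at $\tau\to+\infty$ along $\xi$ forces the top slice $u^{(M)}$ to lie pointwise in $\Ima\Pi_0^{(\xi)}=\ker H^D(e_1)$, so that $H^D(e_1)u^{(M)}=0$.

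The conclusion is obtained by iterating this leading-order vanishing in the pointwise equation $(H^D-\lambda)u=h$ and exploiting the strict inequality $N_\xi^{\max}(\hat g)\le 0<M$ in the original hypothesis: the kernel constraint $H^D(e_1)u^{(M)}=0$ makes the top of $H^D u$ collapse to a strictly lower level in $\xi$, which together with the $v_S$-pairing identity $N_\xi^{\max}\bigl(\langle v,\overline{v_S}\rangle\bigr)=2N_\xi^{\max}(v)$ of Section~II (applied to $v=u$) produces a further cancellation on the right-hand side that cannot be compensated by $\tilde W H_0 u$. This forces $u^{(M)}=0$, contradicting the choice of $M$, and establishes $\hat u(n)=0$ for $\xi\cdot n>0$.
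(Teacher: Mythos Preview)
Your proposal has the right ingredients but a genuine gap at the decisive step. You correctly isolate the obstacle: from $q_\lambda u=L_\lambda h$ with $h=g-\tilde W H_0 u$ you only obtain $M\le N_\xi^{\max}(\hat h)\le\max(0,C_W)$, which is useless when $\supp\hat W$ reaches into $\{\xi\cdot n>0\}$. Your proposed fix, however, does not close. Even if the leading Fourier coefficient of $L_\lambda$ were the rank--two projector $\Pi_0^{(\xi)}$ (this needs proof, and note that ``top in direction $\xi$'' is not the same as ``$y_1^4$--coefficient'' unless $\xi$ is a coordinate direction), the conclusion $u^{(M)}\in\ker h^D(e_1)$ only puts $u^{(M)}$ in a two--dimensional subspace; it does not force $u^{(M)}=0$. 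The subsequent sentence about ``iterating'' in $(H^D-\lambda)u=h$ and obtaining ``a further cancellation that cannot be compensated by $\tilde W H_0 u$'' is not an argument: the top slice of $h$ can perfectly well sit at level $C_W>0$ and interact with lower Fourier modes of $L_\lambda$, so nothing prevents the right--hand side from matching the left at level $4|\xi|_\infty+M$.

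The paper resolves the obstacle by a different, concrete mechanism that you do not carry out. One first passes to the conjugate equation: since $\overline{H^D(-x)}=-H^D(x)$, conjugating $(H^{D_p}-\lambda)u=f$ gives $-D_pH_0u_S=\lambda u_S+f_S$. Because $B_1=-\Phi_0\Pi_0$ satisfies $B_1H_0=0$, left--multiplying by $B_1D_p^{-1}$ yields the key identity
\[
B_1D_p^{-1}u_S=-\lambda^{-1}B_1D_p^{-1}f_S .
\]
One then forms the scalar trigonometric polynomial $T(x)=\langle D_p^{-1}u_S(x),u(x)\rangle_{\R^6}$, for which $N_\xi^{\max}(T)=2N_\xi^{\max}(u)$ since $\hat D_p^{-1}(n)$ is positive definite. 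Writing $q_\lambda u=\lambda^2 B_1Ku+R_0$ (with $K=\hat D\hat D_p^{-1}-I$ compactly supported and $N_\xi^{\max}(R_0)$ controlled by $3|\xi|_\infty$), and using the symmetry of $B_1$ to transfer it onto $D_p^{-1}u_S$, the dangerous term becomes $\langle B_1D_p^{-1}u_S,Ku\rangle=-\lambda^{-1}\langle B_1D_p^{-1}f_S,Ku\rangle$, in which $u_S$ has been replaced by $f_S$ with $N_\xi^{\max}(f_S)\le 0$. Comparing $N_\xi^{\max}$ on both sides of $q_\lambda T=\cdots$ then gives $2N_\xi^{\max}(u)\le\max(N_\xi^{\max}(u),\,2N_\xi^{\max}(u)-|\xi|_\infty)$, hence $N_\xi^{\max}(u)\le 0$. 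This substitution $u_S\rightsquigarrow f_S$ via $B_1H_0=0$ is the missing idea in your argument.
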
 
\begin{proof}
 It is sufficient to consider the case $l=0$.
 The sequence $\hat u$ satisfies
\begin{equation}
\label{eq.HDpvu=f}
 (H^{D_p}-\lambda) u= f,
\end{equation} 
 where $\hat f$ is compactly supported in $\{n\in{\bf Z}^3$ $\mid$ $\xi\cdot n\le 0\}$.
 Thus, $N_\xi^{max}(f)\le 0$. 
 The operator $\hat D(\hat D_p)^{-1}$ is bounded and invertible in $l^2({\bf Z}^3)$ and
 is an operator of multiplication by a diagonal matrix of the form $I+\hat K$, so
 $\hat K=\hat D(\hat D_p)^{-1} - I$ is an operator of multiplication by a diagonal matrix
 and has compact support.
 We then have
\begin{equation}
\label{eq.HDu}
 (H^D -\lambda) u = \lambda  K u + g,
\end{equation}
 where $\hat g:=\hat D(\hat D_p)^{-1}\hat f$ vanishes in $\{n\in{\bf Z}^3$ $\mid$ $n\cdot\xi>0\}$,
 so $N_\xi^{max}(g)\le 0$.\\
 Since $\hat D_p$ is real-valued we have
\[ D_p v_S(x) =  (2\pi)^{-3/2}\sum_{n\in{\bf Z}^3} \hat D_p(n)\ove{\hat v(n)}e^{-inx} = \ove{D_p v(-x)} .\]
 In addition, since $\ove{H^D(-x)} = H^D(-x) =-H^D(x)$, then \eqref{eq.HDpvu=f} implies
\[
 -D_p H_0 u_S = \lambda u_S + f_S.
\]
 Since $B_1(y)=-\Phi_0(z)\Pi_0(y)$ we have $B_1 H_0=0$. 
 Then, multiplying \eqref{rel.B1vS} from the left by $B_1 D_p^{-1}$ and observing that
 $\Pi_0 H_0=0$, we get
\begin{equation}
\label{rel2.B1v}
  B_1D_p^{-1} u_S =  -\lambda^{-1} B_1 D_p^{-1} f_S .
\end{equation}
 Multiplying \eqref{eq.HDu} from the left by $L_\lambda$ and using \eqref{rel.B1vS}, we get
\[
  q_\lambda u = \lambda L_\lambda K u + L_\lambda g = \lambda^2 B_1K u + R_0,
\]
 where
\begin{equation}
\label{def.R0}
  R_0:=\lambda^3 B_2(\cdot;\lambda)K u + L_\lambda g.
\end{equation}
 Put $T(x):=\sum_{n\in{\bf Z}^3} \hat D_p^{-1} (n) |\hat u(n)|^2 e^{2inx}$.
 On one hand, since $\hat D_p^{-1} (n) |\hat u(n)|^2\neq 0$ if $\hat u(n)\neq 0$, then
\begin{equation}
\label{val.NT}
  N_\xi^{max}(T) = 2N_\xi^{max}(u).
\end{equation}
 On the other hand, we have
\[
  T(x) =  < D_p^{-1}u_S(x),u(x)>_{{\bf R}^6}.
\]
 Thus,
\begin{eqnarray*}
  q_\lambda(x) T(x)  &=&  < D_p^{-1}u_S(x),q_\lambda u(x)>_{{\bf R}^6} \\
                     &=& \lambda^2 < D_p^{-1}u_S(x),B_1(x) K u(x)>_{{\bf R}^6} + R(x),
\end{eqnarray*}
 where $R(x):= < D_p^{-1}u_S(x),R_0(x)>_{{\bf R}^6}$.
 Then, since $B_1(x)$ and $D$ are real-symmetric matrices, by virtue of \eqref{rel2.B1v},
\begin{eqnarray}
\nonumber
  q_\lambda(x) T(x) &=& \lambda^2 < B_1D_p^{-1}u_S(x),K u(x)>_{{\bf R}^6} + R(x)\\
\label{val.qT}
  &=& -\lambda < B_1 D_p^{-1} f_S(x),K u(x)>_{{\bf R}^6} +R(x).
\end{eqnarray}
 We have, in view of \eqref{def.R0}, \eqref{est.NLt},
\begin{eqnarray}
\nonumber
  N_\xi^{max}(R) \le N_\xi^{max}(D_p^{-1}u_S) +
  \max(N_\xi^{max}(B_2(\cdot;\lambda)K u),N_\xi^{max}(L_\lambda g)) \\
\label{maj.NR}
 \le N_\xi^{max}(u)+ \max(3|\xi|_\infty+N_\xi^{max}(u),4|\xi|_\infty),
\end{eqnarray}
 and, in addition,
\begin{eqnarray}
\label{maj.NB1DpKu}
  N_\xi^{max}(< B_1 D_p^{-1} f_S,K u>_{{\bf R}^6} ) \le 4|\xi|_\infty + N_\xi^{max}(u).
\end{eqnarray}
 Thus, in view of \eqref{val.qT}, \eqref{maj.NR}, \eqref{maj.NB1DpKu}, and \eqref{est.NLt},
\[
 N_\xi^{max}(q_\lambda T)  \le  \max(4|\xi|_\infty + N_\xi^{max}(u), 3|\xi|_\infty+2N_\xi^{max}(u)).
\]
 Using \eqref{val.Nqv}, \eqref{val.NT} and \eqref{est.NLt}, we obtain
\begin{eqnarray}
\nonumber
 N_\xi^{max}(T) = N_\xi^{max}(q_\lambda T) - N_\xi^{max}(q_\lambda) \\
\label{maj.T}
 \le  \max(N_\xi^{max}(u), -|\xi|_\infty+2N_\xi^{max}(u)).
\end{eqnarray}
 Thus, \eqref{maj.T} and \eqref{val.NT} show that
\[
   N_\xi^{max}(u) = N_\xi^{max}(T)-N_\xi^{max}(u) \le 0.
\]
\end{proof}

\noindent V. (Last step.) Theorem \ref{th.1.3} is a direct consequence of Lemma \ref{lem.5}.
 In fact, putting $H(\xi,l):=\{n\in{\bf Z}^3$ $\mid$ $\xi\cdot n> l\}$, $l\in\R$, we have
 $H(\xi,l)= {\bf Z}^3\sauf \calP_{disc}^-(\xi,l)$.
 Hence, if $u$ satisfies the conditions of Theorem \ref{th.1.3}, then $\hat u(n)=0$ in 
 $\cup_{(\xi,l)\in F} H(\xi,l)= {\bf Z}^3\sauf \Kint=\Kext$.
\qed

\subsection{Proof of Proposition \ref{prop.1}}
 As above we put $\hat K=\hat D(\hat D_p)^{-1} - I$, which has compact support.
 The distribution $u$ satisfies the above relation \eqref{eq.HDu} with $g=0$.
 Hence, the distribution $v\in \calB_0^*(\T^3)$ defined by $v(x)=(\tau^-(z)-\lambda^2)u(x)$
 is a trigonometric polynomial. By observing that $\tau^-(z)-\lambda^2\le \lambda_-^2-\lambda^2<0$,
 we thus obtain $u(x)= (\tau^-(z)-\lambda^2)^{-1}v(x)$, so $u\in L^2(\T^3,{\bf C}^6)$.

\subsection{Proof of Corollary \ref{coro.1.4}}
 Put $f:=(H^{D_p}-\lambda)u$.
 Assume that a finite convex set $\Kint$ contains $\supp(\hat D^p-\hat D)$.
 Then, $\hat f$ has support in $\Kint$.
 In addition, we have $(\hat H^D-\lambda)\hat u=\lambda \hat K \hat u + \hat D^{-1} \hat D_p \hat f$,
 where $\hat K:= \hat D (\hat D_p)^{-1}-I$ has compact support.
 Since (RT) for $\hat H^D-\lambda$ holds, then $\hat u$ has compact support. 
 Consequently, thanks to Theorem \ref{th.1.3}, $\hat u$ vanishes in $\Kext$.
\qed

\end{document}